\begin{document}
\newcommand{\per}{{\rm per}}
\newtheorem{teorema}{Theorem}
\newtheorem{lemma}{Lemma}
\newtheorem{utv}{Proposition}
\newtheorem{svoistvo}{Property}
\newtheorem{sled}{Corollary}
\newtheorem{con}{Conjecture}
\newtheorem{zam}{Remark}
\newtheorem{quest}{Question}

\author{A. A. Taranenko}
\title{Vertices of the polytope of polystochastic matrices and product constructions\thanks{This work was funded by the Russian Science Foundation under grant No 22-21-00202, https:// rscf.ru/project/22-21-00202/.
}}
\date{}

\maketitle

\begin{abstract}
A multidimensional nonnegative matrix is called  polystochastic if the sum of its entries at each line is equal to $1$. The set of all polystochastic matrices of order $n$ and dimension $d$ is a convex polytope $\Omega_n^d$. In the present paper, we compare known bounds on the number $V(n,d)$ of vertices of the polytope $\Omega_n^d$, propose two constructions of vertices of $\Omega_n^d$ based on multidimensional matrix multiplication, and list all vertices of the polytope $\Omega_3^4$.
\end{abstract}

\section{Definitions}

A \textit{$d$-dimensional matrix $A$ of order $n$} is an array $(a_\alpha)_{\alpha \in I^d_n}$, $a_\alpha \in\mathbb R$, $I_n^d = \{  \alpha = (\alpha_1, \ldots, \alpha_d) | \alpha_i \in \{ 1, \ldots, n\}\}$.  A matrix $A$ is called \textit{nonnegative} if all $a_{\alpha} \geq 0$, and it is called a \textit{$(0,1)$-matrix} if all its entries are $0$ or $1$. The \textit{support} $supp(A)$ of a matrix $A$ is the set of all indices $\alpha$ for which $a_{\alpha} \neq 0$. Denote the  cardinality of the support $A$ by $N(A)$.

Let $k\in \left\{0,\ldots,d\right\}$. A \textit{$k$-dimensional plane} in $A$ is the submatrix of $A$ obtained by fixing $d-k$ indices and letting the other $k$ indices vary from 1 to $n$. A $1$-dimensional plane is said to be a \textit{line}, and a $(d-1)$-dimensional plane is a \textit{hyperplane}. Matrices $A$ and $B$ are called \textit{equivalent} if one can be obtained from the other by permutations of parallel hyperplanes and/or transposes.

A multidimensional nonnegative matrix $A$ is called \textit{polystochastic} if the sum of its entries at each line is equal to $1$. Polystochastic matrices of dimension $2$ are known as doubly stochastic matrices.  Doubly stochastic $(0,1)$-matrices are exactly the permutation matrices. $d$-Dimensional polystochastic $(0,1)$-matrices for $d \geq 3$ are said to be \textit{multidimensional permutations}.

It is easy to see that the set of $d$-dimensional polystochastic matrices of order $n$ is a convex polytope that we denote as $\Omega_n^d$.  We will say that a matrix $A$ is a  \textit{convex combination} of  matrices $B_1, \ldots, B_k$ if there exist $\lambda_1, \ldots, \lambda_k$, $0 < \lambda_i < 1$, $\lambda_1 + \cdots + \lambda_k = 1$ such that 
$$A = \lambda_1 B_1 + \cdots + \lambda_k B_k.$$ 

 A matrix $A \in \Omega_n^d$ is a \textit{vertex} of the Birkhoff polytope $\Omega_n^d$ if  for every convex combination $A = \lambda_1 B_1 + \cdots + \lambda_k B_k$ we have that all $B_i$ are equal to $A$. In other words, vertices  of  the  polytope $\Omega_n^d$ cannot be expressed as  a nontrivial convex combination of elements of $\Omega_n^d$.
Denote by $V(n,d)$ the number of vertices of the polytope $\Omega_n^d$.

\section{Preliminary results}

From the definition, it is not hard to deduce the following geometric properties of the polytope $\Omega_n^d$. 

\begin{utv} \label{polytopedim}
The polytope $\Omega_n^d$ is a $(n-1)^d$-dimensional polytope in $\mathbb{R}^{n^d}$ with $n^d$ facets. 
\end{utv}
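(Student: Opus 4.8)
The plan is to prove the two assertions in turn. Since a $d$-dimensional matrix of order $n$ has $n^d$ entries, $\Omega_n^d$ lives in $\mathbb{R}^{n^d}$, where it is cut out by the $n^d$ inequalities $a_\alpha \ge 0$ together with the line-sum equalities. To get the dimension I would first determine $\dim \mathcal{L}$, where $\mathcal{L}$ is the affine subspace of $\mathbb{R}^{n^d}$ defined by all line-sum equalities: identify the space of $d$-dimensional matrices of order $n$ with $U^{\otimes d}$, $U = \mathbb{R}^n$, observe that summing along lines in direction $i$ is the contraction of the $i$-th tensor factor with the all-ones functional $\mathbf{1}^{\ast}$, and split $U = V_0 \oplus \mathbb{R}v$ with $V_0 = \ker \mathbf{1}^{\ast}$ and $\mathbf{1}^{\ast}(v) = 1$. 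This yields $U^{\otimes d} = \bigoplus_{S \subseteq \{1,\dots,d\}} W_S$, where $W_S$ has the line $\mathbb{R}v$ in the slots belonging to $S$ and $V_0$ in the others; the contraction in direction $i$ annihilates exactly the summands $W_S$ with $i \notin S$ and is injective on the remaining ones, so the common kernel of the $d$ contractions is $W_\emptyset = V_0^{\otimes d}$, of dimension $(n-1)^d$. Consequently $\mathcal{L}$, a translate of this kernel by the uniform polystochastic matrix $\tfrac1n \mathbf{1}^{\otimes d}$, has dimension $(n-1)^d$; and since $\tfrac1n \mathbf{1}^{\otimes d} \in \Omega_n^d$ has every coordinate equal to $1/n > 0$, all inequalities $a_\alpha \ge 0$ are strict there, so $\Omega_n^d$ is full-dimensional in $\mathcal{L}$ and $\dim \Omega_n^d = (n-1)^d$.

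For the facet count, the only candidates are the $n^d$ inequalities $a_\alpha \ge 0$, so it suffices to show that each of them is essential, i.e. that $F_{\alpha_0} := \Omega_n^d \cap \{a_{\alpha_0} = 0\}$ has dimension $(n-1)^d - 1$. The functional $a_{\alpha_0}$ is non-constant on $\mathcal{L}$ (a suitable rank-one tensor from $V_0^{\otimes d}$ has nonzero $\alpha_0$-entry), hence $\mathcal{L} \cap \{a_{\alpha_0} = 0\}$ has dimension $(n-1)^d - 1$, and it is enough to exhibit a polystochastic matrix whose unique zero entry sits at $\alpha_0$ — such a matrix is a relative-interior point of that face. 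I would construct it by induction on $d$: for $d = 1$ take $\tfrac1{n-1}(\mathbf{1} - e_{\alpha_0})$; for the step write $\alpha_0 = (i_0, \beta_0)$, take the matrix $B$ provided for $\beta_0$ by induction (all entries at most $\tfrac1{n-1}$, a single zero at $\beta_0$), and define a $(d{+}1)$-dimensional matrix whose $i_0$-th hyperplane in the first direction is $B$ and each of whose other $n-1$ hyperplanes is $\tfrac1{n-1}(\mathbf{1} - B)$. One checks that every hyperplane is polystochastic and that the first-direction line sums equal $B_\beta + (n-1)\tfrac1{n-1}(1 - B_\beta) = 1$, so the matrix is polystochastic with all entries at most $\tfrac1{n-1}$ and with its only zero at $(i_0, \beta_0)$.

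The crux is the dimension statement, equivalently the claim that the line-sum equalities have rank exactly $n^d - (n-1)^d$; the tensor decomposition is the device that makes this clean. (Alternatively one can argue that the entries indexed by $\{1,\dots,n-1\}^d$ are free and determine all the others by back-substitution along lines, but then one must verify that the remaining line-sum equations hold automatically, which is more cumbersome.) I also note that the facet part really needs $n \ge 3$: for $n = 2$ the matrix $\mathbf{1} - B$ acquires a zero and the construction fails — indeed $\Omega_2^d$ is a segment with only $2$ facets — so the count $n^d$ should be read with $n \ge 3$.
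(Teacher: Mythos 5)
Your argument is correct, and it is worth noting that the paper itself offers no proof at all: Proposition~\ref{polytopedim} is stated with only the remark that it is ``not hard to deduce from the definition,'' so there is nothing to compare against except the bare claim. Your two-step plan is exactly what such a deduction has to contain, and both steps check out. The tensor decomposition $U^{\otimes d}=\bigoplus_S W_S$ cleanly identifies the common kernel of the $d$ line-sum contractions as $V_0^{\otimes d}$ (one should say, as you essentially do, that each contraction $c_i$ is injective on the \emph{sum} $\bigoplus_{S\ni i}W_S$ because the images of distinct $W_S$ land in independent summands of $U^{\otimes(d-1)}$ --- this is what upgrades $\ker c_i\supseteq\bigoplus_{S\not\ni i}W_S$ to equality), and the uniform matrix $\tfrac1n\mathbf 1^{\otimes d}$ witnesses both that the affine solution space is a translate of this kernel and that $\Omega_n^d$ is full-dimensional in it. For the facets, exhibiting for each index $\alpha_0$ a polystochastic matrix whose unique zero is at $\alpha_0$ is precisely the right certificate: it shows each face $\Omega_n^d\cap\{a_{\alpha_0}=0\}$ has dimension $(n-1)^d-1$ and that distinct indices give distinct facets, and your inductive construction (with the invariant that all entries stay $\le\tfrac1{n-1}$) is sound. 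Your closing caveat is also a genuine observation about the statement itself, not a defect of your proof: for $n=2$ the polytope $\Omega_2^d$ is a segment with $2$ facets rather than $2^d$, so the facet count in the proposition must be read as assuming $n\ge 3$ (this does not harm the paper's later use of the facet count in its upper bound on $V(n,d)$, since overestimating the number of facets only weakens that bound).
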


For the first time, the problem of determining all vertices of the polytope of polystochastic matrices  was  raised  by Jurkat and Ryser~\cite{JurRys.stochmatr}.

It is easy to see that every multidimensional permutation is a vertex of $\Omega_n^d$. The well-known Birkhoff theorem states that every vertex of the polytope of doubly stochastic matrices is a permutation matrix.  

The polytope of $d$-dimensional polystochastic matrices of order $2$ can be completely described (see, for example,~\cite{my.obzor}).  From  the description it follows that  all vertices of $\Omega_2^d$ are multidimensional permutations.

The smallest example of a vertex different from a multidimensional permutation is the following $3$-dimensional polystochastic matrix of order $3$. It was discovered independently by many researchers~\cite{ChanPakZha.polystoch,csima.multmatr, CuiLiNg.Birkfortensor,LinLur.birvert, FichSwart.3dimstoch, KeLiXiao.extrpoints, WangZhang.permtensor}.

$$
V_3^3 = \left( \begin{array}{ccc|ccc|ccc}
1 & 0 & 0 & 0 & \nicefrac{1}{2} & \nicefrac{1}{2} & 0 & \nicefrac{1}{2} & \nicefrac{1}{2} \\
0 & \nicefrac{1}{2} & \nicefrac{1}{2} & \nicefrac{1}{2} & \nicefrac{1}{2} & 0 & \nicefrac{1}{2} & 0 & \nicefrac{1}{2} \\
0 & \nicefrac{1}{2} & \nicefrac{1}{2} & \nicefrac{1}{2} & 0 & \nicefrac{1}{2} & \nicefrac{1}{2} & \nicefrac{1}{2} & 0 \\
\end{array}
\right).
$$

The computer search shows that there are 12 multidimensional permutations and 54 matrices equivalent to the above~\cite{FichSwart.3dimstoch, ChanPakZha.polystoch}. All vertices of the polytope $\Omega^3_4$ were also found in~\cite{KeLiXiao.extrpoints}.

The following basic facts on vertices of the polytope of polystochastic matrices were established by Jurkat and Ryser~\cite{JurRys.stochmatr}.

\begin{utv}[\cite{JurRys.stochmatr}]
Let $A$  be  a matrix from  $\Omega_n^d$. If there is a $d$-dimensional matrix $B$ of order $n$ such that $supp(B) \subset supp(A)$ and every line of $B$ sums to $0$, then $A$ is  not a vertex.
 \end{utv}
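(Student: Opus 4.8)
The plan is to perturb $A$ along the direction $B$, once in each sense, and show that both perturbed matrices still lie in $\Omega_n^d$, which exhibits $A$ as their midpoint and hence as a nontrivial convex combination. We may assume $B \neq 0$ (if $B = 0$ the hypothesis carries no content and the conclusion need not hold — e.g. for permutation matrices — so the statement is implicitly about a nonzero $B$). The key consequence of the support condition $supp(B) \subset supp(A)$ is that every index $\alpha$ with $b_\alpha \neq 0$ satisfies $a_\alpha > 0$.

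First I would fix the step size. Put $\varepsilon := \min\{\, a_\alpha / |b_\alpha| : \alpha \in supp(B)\,\}$, which is a well-defined positive number by the observation above. For this $\varepsilon$ (or any smaller positive value) set $A^{\pm} := A \pm \varepsilon B$, and check nonnegativity entrywise: if $\alpha \notin supp(B)$ then the $\alpha$-entry of $A^{\pm}$ is just $a_\alpha \geq 0$; if $\alpha \in supp(B)$ then it equals $a_\alpha \pm \varepsilon b_\alpha \geq a_\alpha - \varepsilon |b_\alpha| \geq 0$ by the choice of $\varepsilon$. So $A^{+}$ and $A^{-}$ are nonnegative matrices.

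Next I would verify the line-sum condition. For an arbitrary line $\ell$, the entries of $B$ on $\ell$ sum to $0$ and the entries of $A$ on $\ell$ sum to $1$, hence the entries of $A^{\pm} = A \pm \varepsilon B$ on $\ell$ sum to $1 \pm \varepsilon \cdot 0 = 1$. Since this holds for every line, $A^{+}, A^{-} \in \Omega_n^d$. Finally, $A = \tfrac12 A^{+} + \tfrac12 A^{-}$ is a convex combination that is nontrivial because $A^{+} - A = \varepsilon B \neq 0$; therefore $A$ is not a vertex of $\Omega_n^d$.

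There is no real obstacle in this argument; the only step needing attention is the choice of $\varepsilon$ guaranteeing that neither perturbation drives an entry negative, and this is precisely where $supp(B) \subset supp(A)$ is used — it ensures $B$ vanishes wherever $A$ does, so the zero entries of $A$ are untouched and only the strictly positive entries are moved, each by less than its own value.
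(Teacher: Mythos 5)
Your perturbation argument is correct and is the standard proof of this fact; the paper itself states this proposition as a cited result of Jurkat and Ryser without giving a proof. Your handling of the two points that need care — the implicit assumption $B \neq 0$ and the choice of $\varepsilon$ using $supp(B) \subset supp(A)$ to keep $A \pm \varepsilon B$ nonnegative — is exactly right.
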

 
 \begin{utv}[\cite{JurRys.stochmatr}]
 A polystochastic matrix $A$ is a vertex if and only if there is a unique polystochastic matrix with support equal to $supp(A)$.
 \end{utv}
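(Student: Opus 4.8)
The plan is to reduce both implications of the proposition to a single criterion: \emph{$A$ is a vertex of $\Omega_n^d$ if and only if there is no nonzero $d$-dimensional matrix $C$ of order $n$ with $supp(C)\subseteq supp(A)$ all of whose lines sum to $0$} (call such a $C$ an \emph{admissible perturbation} of $A$). One half of this criterion is almost immediate: if $C$ is an admissible perturbation, then since $A$ is strictly positive on the finite set $supp(A)$ while $C$ vanishes off $supp(A)$, for every sufficiently small $\varepsilon>0$ (say $\varepsilon<\min_{\alpha\in supp(A)}a_\alpha/(1+\max_\alpha|c_\alpha|)$) both $A+\varepsilon C$ and $A-\varepsilon C$ are nonnegative; their line sums equal $1\pm\varepsilon\cdot 0=1$, so both lie in $\Omega_n^d$. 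As $C\neq 0$ they are distinct, and $A=\tfrac12(A+\varepsilon C)+\tfrac12(A-\varepsilon C)$ is a nontrivial convex combination, so $A$ is not a vertex.

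For the converse half of the criterion I would use nonnegativity in an essential way. Suppose $A$ is not a vertex, so $A=\lambda_1 B_1+\cdots+\lambda_k B_k$ with $0<\lambda_i<1$, $\sum\lambda_i=1$, $B_i\in\Omega_n^d$, and not all $B_i$ equal to $A$. For each $\alpha$ the identity $a_\alpha=\sum_i\lambda_i (B_i)_\alpha$ together with nonnegativity of the summands gives $\lambda_i (B_i)_\alpha\le a_\alpha$; hence $a_\alpha=0$ forces $(B_i)_\alpha=0$ for all $i$, i.e. $supp(B_i)\subseteq supp(A)$ for every $i$. Choosing $j$ with $B_j\neq A$, the matrix $C=A-B_j$ is then a nonzero admissible perturbation of $A$. (Alternatively, this direction is exactly the preceding proposition of Jurkat and Ryser.) This establishes the criterion.

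It remains to match the criterion with the uniqueness condition, and here I would run the same perturbation trick once more. If there is a polystochastic $A'\neq A$ with $supp(A')=supp(A)$, then $C=A-A'$ has all line sums $0$ and $supp(C)\subseteq supp(A)$, so it is a nonzero admissible perturbation and $A$ is not a vertex; conversely, if $C$ is a nonzero admissible perturbation then for small $\varepsilon>0$ the matrix $A'=A+\varepsilon C$ is polystochastic, differs from $A$, and — by positivity of $A$ on $supp(A)$ and vanishing of $C$ off it — satisfies $supp(A')=supp(A)$, so uniqueness fails. Thus ``$A$ is a vertex'' and ``$A$ is the unique polystochastic matrix with support $supp(A)$'' are both equivalent to ``$A$ has no nonzero admissible perturbation'', which proves the proposition.

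The argument is short, and the only genuinely delicate points are bookkeeping ones: verifying that a single $\varepsilon$ works for all the perturbed matrices at once (it does, because $supp(A)$ is finite), and — the real content of the ``not a vertex'' direction — observing that nonnegativity of the $B_i$ together with positivity of the weights $\lambda_i$ forces $supp(B_i)\subseteq supp(A)$, which is precisely what guarantees that the perturbation we extract is supported where we need it.
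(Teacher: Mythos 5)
The paper states this proposition without proof, citing Jurkat and Ryser, so there is no in-paper argument to compare against; judged on its own, your proof is correct and complete. Your strategy --- showing that both ``being a vertex'' and ``being the unique polystochastic matrix with support $supp(A)$'' are equivalent to the nonexistence of a nonzero matrix $C$ with $supp(C)\subseteq supp(A)$ all of whose lines sum to $0$ --- is exactly the classical Jurkat--Ryser mechanism, and every step checks out: the perturbations $A\pm\varepsilon C$ stay in $\Omega_n^d$ for small $\varepsilon$ because $A$ is bounded away from $0$ on its finite support and $C$ vanishes off it; the support equality $supp(A+\varepsilon C)=supp(A)$ holds for the same reason; and the key inclusion $supp(B_i)\subseteq supp(A)$ in the ``not a vertex'' direction does follow from nonnegativity of the $B_i$ together with positivity of the weights $\lambda_i$. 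One minor slip in a parenthetical: the ``preceding proposition'' of the paper (a zero-line-sum matrix supported inside $supp(A)$ forces $A$ not to be a vertex) is the \emph{first} half of your criterion, the one you prove with the $\pm\varepsilon C$ trick, not the converse half where you cite it; since you give a self-contained argument for that converse anyway, this misattribution does not affect the validity of the proof.
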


\begin{utv}[\cite{JurRys.stochmatr}] \label{vertexcrit}
A polystochastic matrix $A$ is a vertex of $\Omega_n^d$ if and only if the incidence matrix $I(A)$ between the elements of $supp(A)$ and lines of $A$ has a full rank. 
\end{utv}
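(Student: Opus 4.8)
The plan is to translate the algebraic condition ``$I(A)$ has full rank'' into a statement about $d$-dimensional matrices supported on $supp(A)$ whose lines all sum to $0$, and then connect that statement to the extreme-point condition by means of small perturbations $A \pm \varepsilon C$. First I would fix the interpretation of $I(A)$: its rows are indexed by the $d n^{d-1}$ lines of $A$, its columns by the $N(A)$ elements of $supp(A)$, with a $1$ in position (line $\ell$, index $\alpha$) exactly when $\alpha\in\ell$. Since $I(A)$ has $N(A)$ columns, ``full rank'' must mean rank $N(A)$, i.e. the columns are linearly independent, i.e. $\ker I(A)=\{0\}$ where $I(A)$ is viewed as a linear map $\mathbb{R}^{supp(A)}\to\mathbb{R}^{\text{lines}}$. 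The crucial observation is that a vector $c=(c_\alpha)_{\alpha\in supp(A)}$ lies in $\ker I(A)$ if and only if the $d$-dimensional matrix $C$ of order $n$ defined by $c_\alpha$ on $supp(A)$ and $0$ elsewhere has every line summing to $0$: the $\ell$-th coordinate of $I(A)c$ is precisely the sum of $C$ along $\ell$, the entries of $C$ off $supp(A)$ contributing nothing. Hence $I(A)$ has full rank iff the only matrix $C$ with $supp(C)\subseteq supp(A)$ and all line sums zero is $C=0$.

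For the implication ``full rank $\Rightarrow$ vertex'', I would take any convex combination $A=\sum_i \lambda_i B_i$ with $B_i\in\Omega_n^d$ and $\lambda_i>0$. Nonnegativity of the $B_i$ together with positivity of the $\lambda_i$ forces $supp(B_i)\subseteq supp(A)$ for every $i$. Then for any two indices $i,j$ the difference $C=B_i-B_j$ satisfies $supp(C)\subseteq supp(A)$ and has all line sums equal to $1-1=0$, so $C=0$ by the full-rank hypothesis. Thus all $B_i$ coincide and equal $A$, so $A$ is a vertex.

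For the converse I would argue by contrapositive. If $I(A)$ is rank-deficient, pick a nonzero $C$ with $supp(C)\subseteq supp(A)$ and all line sums $0$. For sufficiently small $\varepsilon>0$ the matrices $A\pm\varepsilon C$ are nonnegative — wherever $C$ is nonzero, $A$ is strictly positive, and there are only finitely many entries to control — and they still have all line sums equal to $1$, hence $A\pm\varepsilon C\in\Omega_n^d$. Then $A=\tfrac12(A+\varepsilon C)+\tfrac12(A-\varepsilon C)$ is a nontrivial convex combination, so $A$ is not a vertex. (This direction is also immediate from the preceding proposition, taking $B=C$.)

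I do not anticipate a serious obstacle: the proof is essentially a dictionary between the linear-algebraic description via $I(A)$ and the convex-geometric definition of a vertex. The two points that require care are (i) deducing $supp(B_i)\subseteq supp(A)$ from nonnegativity and $\lambda_i>0$, and (ii) justifying that $A\pm\varepsilon C$ stays in $\Omega_n^d$, which relies exactly on the support containment $supp(C)\subseteq supp(A)$; getting the ``full rank'' convention right (full column rank, since typically $N(A)<dn^{d-1}$) is also worth stating explicitly.
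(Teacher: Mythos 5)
Your proof is correct. Note, however, that the paper does not actually prove this proposition: it is stated as a citation to Jurkat and Ryser, so there is no in-paper proof to compare against. Your argument — identifying $\ker I(A)$ with the space of matrices supported on $supp(A)$ whose lines all sum to $0$, then using $supp(B_i)\subseteq supp(A)$ for the forward direction and the perturbation $A\pm\varepsilon C$ for the converse — is the standard proof and is complete; you also correctly observe that the converse direction is exactly the paper's preceding proposition about a matrix $B$ with $supp(B)\subset supp(A)$ and zero line sums, and you handle the potentially ambiguous phrase ``full rank'' (full column rank, since the number of lines generally exceeds $N(A)$) explicitly.
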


From these results, we have  the following bound on the cardinality $N(A)$ of the support of vertices $A$ of  $\Omega_n^d$.

\begin{utv} \label{supportbound}
Let $A$ be a vertex of the polytope of $d$-dimensional polystochastic matrices  of order $n$. Then the cardinality of the support of $A$
$$N(A) \leq n^d - (n-1)^d.$$
\end{utv}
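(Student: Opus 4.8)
The plan is to combine the rank criterion of Proposition~\ref{vertexcrit} with the dimension count of Proposition~\ref{polytopedim}. Let $M$ be the incidence matrix between \emph{all} indices $\alpha\in I_n^d$ and all lines of a $d$-dimensional matrix of order $n$: the columns of $M$ are indexed by $I_n^d$, the rows by the $d\cdot n^{d-1}$ lines, and the $(\ell,\alpha)$-entry is $1$ if $\alpha$ lies on the line $\ell$ and $0$ otherwise. Viewing a real matrix $A=(a_\alpha)$ as the vector $x=(a_\alpha)_{\alpha\in I_n^d}\in\mathbb R^{n^d}$, the polystochasticity condition is exactly $Mx=\mathbf 1$, where $\mathbf 1$ is the all-ones vector over the lines. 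Hence the affine hull of $\Omega_n^d$ is the (nonempty) solution set of $Mx=\mathbf 1$, an affine subspace of $\mathbb R^{n^d}$ of dimension $n^d-\operatorname{rank}(M)$. By Proposition~\ref{polytopedim} this dimension equals $(n-1)^d$, so $\operatorname{rank}(M)=n^d-(n-1)^d$. (Alternatively one could write down this many independent line-sum equations by hand and check the rest are dependent, but quoting Proposition~\ref{polytopedim} is shorter.)

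Now observe that the incidence matrix $I(A)$ of Proposition~\ref{vertexcrit} is precisely the submatrix of $M$ obtained by deleting every column indexed by an $\alpha\notin supp(A)$; it is therefore a column-submatrix of $M$, and consequently $\operatorname{rank}(I(A))\le\operatorname{rank}(M)=n^d-(n-1)^d$. On the other hand, if $A$ is a vertex of $\Omega_n^d$, then by Proposition~\ref{vertexcrit} the matrix $I(A)$ has full rank. Since the columns of $I(A)$ are indexed by the $N(A)$ elements of $supp(A)$ and there are in general (many) more lines than support elements, "full rank'' here means full column rank, i.e.\ the $N(A)$ columns of $I(A)$ are linearly independent; equivalently, the map sending a vector of values on $supp(A)$ to the corresponding vector of line sums is injective, which is exactly the uniqueness of the polystochastic completion. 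Thus $N(A)=\operatorname{rank}(I(A))$, and combining with the previous inequality gives $N(A)\le n^d-(n-1)^d$.

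The only point requiring a little care is the identification of "full rank'' in Proposition~\ref{vertexcrit} with full column rank $\operatorname{rank}(I(A))=N(A)$; this is forced by the orientation of $I(A)$ and by reading the vertex property as an injectivity (uniqueness) statement rather than a surjectivity one. Everything else is a direct size/rank comparison, so I do not expect a genuine obstacle beyond setting up this bookkeeping correctly.
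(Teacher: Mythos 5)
Your argument is correct and is essentially the derivation the paper intends: the paper states this bound as an immediate consequence of Proposition~\ref{vertexcrit} (full column rank of $I(A)$, i.e.\ $\operatorname{rank}(I(A))=N(A)$) together with the dimension count of Proposition~\ref{polytopedim}, which forces $\operatorname{rank}(M)\le n^d-(n-1)^d$ for the full cell--line incidence matrix $M$. Your identification of ``full rank'' with full column rank and the observation that only the inequality $\operatorname{rank}(M)\le n^d-(n-1)^d$ (not the equality) is needed are both sound, so there is nothing to correct.
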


For $3$-dimensional polystochastic matrices, it was  proved in~\cite{JurRys.stochmatr}, and in~\cite{my.obzor} it was generalized for nonnegative multidimensional matrices with equal sums in all $k$-dimensional planes.

The question of the minimum cardinality of the support of a vertex that is different from a permutation for $3$-dimensional polystochastic matrices was raised in~\cite{FichSwart.3dimstoch}.

\section{Bounds on the number of vertices}

McMullen in~\cite{McMullen.facesofpolytope} found the maximal  number of $i$-dimensional faces of polytope with a given number of vertices. As a consequence of this result, one can estimate the number of vertices of a polytope with a given number of facets.

\begin{utv}[see, e.g., \cite{Bron.covexpoly}]
The number of vertices $V$ of a convex $m$-dimensional polytope with $k$ facets, $k \geq m$, is
$$V \leq {k - \lfloor \frac{m+1}{2} \rfloor \choose k - m} +  {k - \lfloor \frac{m+2}{2} \rfloor \choose k - m}. $$
\end{utv}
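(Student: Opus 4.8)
The plan is to pass to the polar dual polytope and then invoke McMullen's theorem from~\cite{McMullen.facesofpolytope}. Let $P$ be an $m$-dimensional polytope with $k$ facets and $V$ vertices. After translating $P$ we may assume that the origin lies in its interior, so that the polar dual $P^{\circ}$ is again an $m$-dimensional polytope. Polar duality reverses the face lattice: an $i$-face of $P$ corresponds to an $(m-1-i)$-face of $P^{\circ}$. In particular, the $k$ facets of $P$ become the $k$ vertices of $P^{\circ}$, while the $V$ vertices of $P$ become the $V$ facets of $P^{\circ}$. Hence the statement is equivalent to an upper bound on the number of facets of an $m$-dimensional polytope with $k$ vertices.

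The second step is to apply McMullen's result, which asserts that among all $m$-dimensional polytopes with $k$ vertices the cyclic polytope $C(k,m)$ simultaneously maximizes the number of $i$-dimensional faces for every $i$, and in particular maximizes the number of facets. Applying this to $P^{\circ}$ gives
\[
V = f_{m-1}(P^{\circ}) \le f_{m-1}(C(k,m)),
\]
so it remains to evaluate the number of facets of $C(k,m)$ and to recognize this value as the right-hand side of the statement.

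The third step is this facet count. I would use Gale's evenness condition: an $m$-element subset $S$ of the vertex set $\{1,\dots,k\}$ spans a facet of $C(k,m)$ if and only if any two indices not in $S$ are separated by an even number of indices of $S$. Counting such subsets is elementary — one organizes $S$ into its maximal blocks of consecutive integers and counts the admissible block patterns, treating $m$ even and $m$ odd separately — and yields the classical value
\[
f_{m-1}(C(k,m)) = \binom{k-\lceil m/2\rceil}{\lfloor m/2\rfloor} + \binom{k-1-\lfloor m/2\rfloor}{\lceil m/2\rceil-1}.
\]
A short computation with the identity $\binom{a}{b}=\binom{a}{a-b}$ and with the floor/ceiling functions then shows that the two summands equal $\binom{k-\lfloor(m+1)/2\rfloor}{k-m}$ and $\binom{k-\lfloor(m+2)/2\rfloor}{k-m}$, respectively, the hypothesis $k\ge m$ ensuring that these binomials are meaningful.

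The only genuine theorem invoked here is McMullen's Upper Bound Theorem, which I treat as a black box; everything else is routine. Accordingly, the main obstacle is not conceptual but bookkeeping: carrying out the Gale-evenness count carefully and, above all, matching the two standard forms of the facet number of $C(k,m)$ through the floor and ceiling arithmetic, where it is easy to be off by one in a binomial coefficient.
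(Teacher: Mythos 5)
Your proposal is correct and follows exactly the route the paper itself indicates (dualize so that facets become vertices, apply McMullen's Upper Bound Theorem, and read off the facet count of the cyclic polytope $C(k,m)$ via Gale's evenness condition); the floor/ceiling bookkeeping also checks out, since $\lfloor (m+1)/2\rfloor=\lceil m/2\rceil$ and $\lfloor (m+2)/2\rfloor=\lfloor m/2\rfloor+1$ turn the right-hand side into the standard expression for $f_{m-1}(C(k,m))$. The paper gives no proof beyond the citation, so there is nothing further to compare.
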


From this bound and Proposition~\ref{polytopedim} one get the following upper bound on the number of vertices of $\Omega_n^d$.

\begin{utv} \label{polytopeupperbound}
For the number of vertices $V(n,d)$ of the polytope of polystochastic matrices $\Omega_n^d$ we have
$$V(n,d) \leq {n^d - \lfloor \frac{(n-1)^d+1}{2} \rfloor \choose n^d - (n-1)^d} +  {n^d - \lfloor \frac{(n-1)^d+2}{2} \rfloor \choose n^d - (n-1)^d}. $$
\end{utv}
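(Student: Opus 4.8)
The plan is to specialize the Upper Bound Theorem stated just above (McMullen's bound, as quoted from~\cite{Bron.covexpoly}) to the combinatorial parameters of $\Omega_n^d$ furnished by Proposition~\ref{polytopedim}. Concretely, the first step is simply to read off from Proposition~\ref{polytopedim} that $\Omega_n^d$ is an $m$-dimensional polytope with $k$ facets, where $m = (n-1)^d$ and $k = n^d$.

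Next, I would verify the hypothesis $k \ge m$ of that theorem, that is, $n^d \ge (n-1)^d$, which holds trivially for every $n \ge 1$; moreover Proposition~\ref{polytopedim} guarantees that $\Omega_n^d$ is genuinely $m$-dimensional in its affine hull, so the theorem applies without degeneracy. Substituting $m = (n-1)^d$ and $k = n^d$ into
$$V \leq {k - \lfloor \frac{m+1}{2} \rfloor \choose k - m} +  {k - \lfloor \frac{m+2}{2} \rfloor \choose k - m}$$
and rewriting $k - m = n^d - (n-1)^d$ in the lower arguments and $k = n^d$ in the upper ones yields exactly the asserted inequality for $V(n,d)$.

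There is no real obstacle here: all the mathematical content is already contained in the two inputs — McMullen's Upper Bound Theorem and the facet/dimension count of Proposition~\ref{polytopedim} — and the remaining work is only the substitution together with a routine simplification of the binomial arguments. The one point worth flagging for the subsequent sections is that this bound is expected to be very loose, since equality in the Upper Bound Theorem is attained only by cyclic polytopes and $\Omega_n^d$ is not of that type for $d \ge 2$; but quantifying that gap is not needed for the present statement.
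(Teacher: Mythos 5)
Your proposal is correct and coincides with the paper's own argument: the bound follows by substituting $m=(n-1)^d$ and $k=n^d$ from Proposition~\ref{polytopedim} into the quoted Upper Bound Theorem, after checking $k\ge m$. Nothing further is needed.
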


For $d=3$, this bound was proved in \cite{Li2Zhang.vertstoch}.

Since every $d$-dimensional permutation of order $n$  is a vertex of $\Omega_n^d$, the number of vertices $V(n,d)$ is bounded from below by the number of such matrices. In~\cite{Keevash.existdesII} it was found the lower bound on  the number of multidimensional permutations, and with the upper bound from~\cite{LinLur.hdimper} it gives the following result.

\begin{teorema}[\cite{Keevash.existdesII}, \cite{LinLur.hdimper}]
The number of $d$-dimensional permutations of order $n$ is 
$\left( \frac{n}{e^{d-1}}  + o(n)\right)^{n^{d-1}} $. 
\end{teorema}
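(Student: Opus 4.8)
Writing $[n]=\{1,\ldots,n\}$, the plan begins with the observation that, after singling out coordinate direction $d$, a $d$-dimensional permutation of order $n$ is exactly the indicator array of a map $f\colon [n]^{d-1}\to[n]$, namely the position of the unique $1$ in each line parallel to the $d$-th axis; the remaining $d-1$ families of line constraints then say precisely that $f$ is a bijection on every axis-parallel line of $[n]^{d-1}$, i.e.\ a $(d-1)$-dimensional Latin hypercube. In particular every such matrix has support of size exactly $n^{d-1}$. Denoting by $P(n,d)$ the number of these maps, the theorem amounts to $P(n,d)=\left(n/e^{d-1}+o(n)\right)^{n^{d-1}}$ as $n\to\infty$ for fixed $d$, equivalently $P(n,d)^{1/n^{d-1}}=(1+o(1))\,n/e^{d-1}$; I would prove the upper and lower bounds separately.

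\textbf{Upper bound.} I would construct $f$ hyperplane by hyperplane, revealing $f|_{x_{d-1}=1},\ldots,f|_{x_{d-1}=n}$ in turn. Once $t-1$ of them are fixed, the $t$-th restriction is a map $[n]^{d-2}\to[n]$ that is Latin on the remaining $d-2$ directions and, at every cell $\beta$, avoids the $t-1$ symbols already used in the line through $\beta$ parallel to axis $d-1$. For $d=3$ this is a permutation with $t-1$ forbidden values in each coordinate, hence the permanent of a $(0,1)$-matrix with all line sums $n-t+1$; Bregman's inequality bounds it by $((n-t+1)!)^{n/(n-t+1)}$, and the telescoping product over $t$, simplified by Stirling's formula, gives $P(n,3)\le\left(n/e^{2}+o(n)\right)^{n^{2}}$. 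For $d\ge 4$ the scheme is the same, but each per-hyperplane count is itself a constrained lower-dimensional Latin-hypercube count; rather than re-derive the multidimensional Bregman--entropy estimate I would quote the Linial--Luria theorem \cite{LinLur.hdimper}, whose entropy argument --- revealing the values of $f$ cell by cell in a carefully chosen order and bounding each conditional entropy --- yields exactly $P(n,d)\le\left(n/e^{d-1}+o(n)\right)^{n^{d-1}}$.

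\textbf{Lower bound.} For $d=3$ this is classical: extending a partial Latin square with $t-1$ rows by one more row counts perfect matchings in an $(n-t+1)$-regular bipartite graph, i.e.\ the permanent of a $(0,1)$-matrix whose normalisation is doubly stochastic, so the van der Waerden bound (Egorychev--Falikman) gives at least $(n-t+1)^n\cdot n!/n^n$ completions; multiplying over $t$ and applying Stirling yields $P(n,3)\ge\left(n/e^{2}-o(n)\right)^{n^{2}}$. For $d\ge 4$ the hyperplane-extension count is no longer a bare permanent, and obtaining the sharp constant $e^{-(d-1)}$ needs the full strength of the design-existence-and-counting results of Keevash \cite{Keevash.existdesII}; this is the genuinely deep input. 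Putting the two halves together gives $P(n,d)^{1/n^{d-1}}=(1+o(1))\,n/e^{d-1}$, which is the theorem.

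\textbf{Where the difficulty lies.} The reduction and the upper bound are essentially bookkeeping plus Bregman's inequality (or its entropic generalisation), and form the part I could write out in full; the real obstacle, which I would not attempt to reprove, is Keevash's lower bound with the exact base $n/e^{d-1}$. One point to watch even in the easy directions is that the accumulated lower-order factors must be shown to have size $e^{o(n^{d-1})}$, so that extracting the $n^{d-1}$-th root leaves only a $(1+o(1))$ factor in the base.
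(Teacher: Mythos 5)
Your sketch is correct and, crucially, it reduces to exactly the same two external inputs that the paper itself relies on: the paper states this theorem purely by citation, attributing the upper bound to Linial--Luria and the lower bound to Keevash, and your outline faithfully reproduces that division of labour (reduction to $(d-1)$-dimensional Latin hypercubes, Bregman/entropy upper bound, van der Waerden for $d=3$ and Keevash for $d\ge 4$ on the lower side). Since the paper offers no independent proof, your proposal is simply a more explicit unpacking of the same route.
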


Most constructions and lower bounds on the number of vertices of $\Omega_n^d$ are usually based on matrices with exactly two $\nicefrac{1}{2}$-entries in each line~\cite{CuiLiNg.Birkfortensor, LinLur.birvert, FichSwart.3dimstoch}.
 In~\cite{LinLur.birvert} Linial and Luria estimated the number of vertices of $\Omega_n^3$ from below by $M(n)^{3/2 - o(1) }$, where $M(n)$ is the number of multidimensional permutations.

Summarizing all of these results, we deduce the following asymptotic bounds for the logarithm of $V(n,d)$ when $n \rightarrow \infty$.
\begin{utv}
If $d \geq 4$ is fixed, then for the number $V(n,d)$ of vertices of $\Omega_n^d$ we have
$$ n^{d-1} \ln n (1 + o(1)) \leq \ln  V(n,d) \leq  dn^{d-1} \ln n (1 + o(1)) .$$
If $d = 3$, then 
$$ \frac{3}{2} n^{2}  \ln n(1 + o(1)) \leq \ln V(n,3) \leq  3 n^{2} \ln n (1 + o(1)), $$
and $\ln V(n,2) = \ln  n! = n \ln n (1 + o(1))$. 
\end{utv}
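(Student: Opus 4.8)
The plan is to simply assemble the stated inequalities by combining the upper bound of Proposition~\ref{polytopeupperbound}, the lower bound coming from the count of multidimensional permutations (the Theorem of Keevash and Linial--Luria), and, for $d=3$, the Linial--Luria refinement $V(n,3) \geq M(n)^{3/2 - o(1)}$. There is no new mathematical content here; the work is entirely asymptotic bookkeeping, so I would present it as a short computation.

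First I would treat the upper bound uniformly for all fixed $d \geq 3$. Write $m = (n-1)^d$ and $k = n^d$, so that $k - m = n^d - (n-1)^d = d\,n^{d-1}(1+o(1))$. Each of the two binomial coefficients in Proposition~\ref{polytopeupperbound} is of the form $\binom{k - \Theta(m)}{k-m}$; since $k - \lfloor (m+1)/2\rfloor$ and $k - \lfloor (m+2)/2 \rfloor$ are both $k(1+o(1)) = n^d(1+o(1))$, I would use the crude estimate $\binom{a}{b} \leq a^b$ to get each term at most $\bigl(n^d(1+o(1))\bigr)^{d n^{d-1}(1+o(1))}$, hence $\ln V(n,d) \leq d n^{d-1} \cdot d \ln n \,(1+o(1))$. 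Wait --- that produces $d^2$, not $d$; to get the clean constant $d$ in the exponent one must instead observe that $\binom{a}{b}$ with $b = o(a)$ satisfies $\ln \binom{a}{b} = b \ln(a/b)(1+o(1))$, and here $a/b = n^d / (d n^{d-1}(1+o(1))) = (n/d)(1+o(1))$, so $\ln \binom{a}{b} = d n^{d-1} \cdot \ln n\,(1+o(1))$. Thus $\ln V(n,d) \leq d\, n^{d-1} \ln n\,(1+o(1))$, which is the claimed upper bound for $d \geq 4$, and for $d=3$ gives $3 n^2 \ln n (1+o(1))$.

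For the lower bounds I would invoke the cited Theorem: the number $M(n)$ of $d$-dimensional permutations of order $n$ satisfies $\ln M(n) = n^{d-1}\ln(n/e^{d-1})(1+o(1)) = n^{d-1}\ln n\,(1+o(1))$. Since every multidimensional permutation is a vertex, $V(n,d) \geq M(n)$, giving $\ln V(n,d) \geq n^{d-1}\ln n\,(1+o(1))$ for all $d \geq 2$; for $d \geq 4$ this is exactly the stated lower bound. For $d = 3$ I would instead use the stronger Linial--Luria estimate $V(n,3) \geq M(n)^{3/2 - o(1)}$, so $\ln V(n,3) \geq \tfrac32 \ln M(n)(1 - o(1)) = \tfrac32 n^2 \ln n\,(1+o(1))$. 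Finally, for $d=2$ Birkhoff's theorem identifies the vertices of $\Omega_n^2$ with the $n!$ permutation matrices, so $\ln V(n,2) = \ln n! = n \ln n\,(1+o(1))$ by Stirling. Collecting these three cases yields the proposition.

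The only genuinely delicate point is the one flagged above: getting the constant $d$ (and not $d^2$) in the upper exponent requires the sharp form $\ln\binom{a}{b} = b\ln(a/b)(1+o(1))$ valid when $b/a \to 0$, together with the observation that $a/b \sim n/d$, which contributes only a lower-order $\ln d$ correction to $\ln(a/b) \sim \ln n$. Everything else --- the permutation count, Birkhoff, and Stirling --- is immediate from the results already quoted.
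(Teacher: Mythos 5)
Your proposal is correct and takes exactly the route the paper implicitly intends (the paper states this proposition with no proof, as a summary of the preceding results): McMullen's bound via Proposition~\ref{polytopeupperbound} for the upper estimate, the Keevash/Linial--Luria count of multidimensional permutations for the lower estimate when $d \geq 4$, the Linial--Luria refinement $V(n,3) \geq M(n)^{3/2 - o(1)}$ for $d=3$, and Birkhoff plus Stirling for $d=2$. Your self-correction from the crude $\binom{a}{b}\leq a^b$ to the sharp $\ln\binom{a}{b} \sim b\ln(a/b)$ is exactly the delicate point and you handle it correctly.

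One small numerical slip, which happens not to affect the conclusion: you assert that $k - \lfloor (m+1)/2\rfloor$ and $k - \lfloor (m+2)/2\rfloor$ are both $n^d(1+o(1))$. In fact, with $m = (n-1)^d \sim n^d$, these quantities are $n^d - (n-1)^d/2 \sim n^d/2$, i.e.\ asymptotic to $n^d/2$, not to $n^d$. This changes $a/b$ from $\sim n/d$ (as you write) to $\sim n/(2d)$, but since only $\ln(a/b)$ enters and $\ln(n/(2d)) = \ln n - \ln(2d) = \ln n(1+o(1))$, the final bound $\ln V(n,d) \leq d\,n^{d-1}\ln n\,(1+o(1))$ is unaffected. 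You may wish to state the asymptotics of $a$ correctly so the argument is airtight.
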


The following lower bounds on the number of  $d$-dimensional permutations of order $n$, when $n$ is fixed, can be found in~\cite{PotKrot.numbernary}.

\begin{teorema}
If $n= 4$, then
$$\log_2 V(4,d) \geq  2^{d-1} + d \log_2 3  + 1 + o(1). $$
If $n = 5$, then 
$$\log_2 V(5,d) \geq   3^{(d-1)/3}  - 0.072. $$
If $n \geq 6$ is even, then 
$$\log_2 V(n,d) \geq   {\left( \frac{n}{2} \right) }^{d-1}, $$
and if  $n \geq 7$ is odd, then 
$$\log_2 V(n,d) \geq   {\left( \frac{n-3}{2} \right)}^{\frac{d-1}{2}} {\left( \frac{n-1}{2} \right)}^{\frac{d-1}{2}}. $$
\end{teorema}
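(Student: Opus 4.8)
The plan is to deduce all four bounds from the corresponding lower bounds on the number $M(n,d)$ of $d$-dimensional permutations of order $n$, which is legitimate since every multidimensional permutation is a vertex of $\Omega_n^d$, so $V(n,d)\ge M(n,d)$. The first step is the standard reformulation: a $d$-dimensional permutation of order $n$ corresponds to a $(d-1)$-ary quasigroup of order $n$ (equivalently, a Latin hypercube), via the bijection sending a permutation matrix $A$ with support $S\subseteq I_n^d$ to the function $f$ with $f(\alpha_1,\dots,\alpha_{d-1})=\alpha_d$ whenever $(\alpha_1,\dots,\alpha_d)\in S$; polystochasticity of $A$ translates exactly into $f$ being a $(d-1)$-ary quasigroup, i.e.\ $f$ restricting to a bijection in each argument once the other $d-2$ are fixed. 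Hence it suffices to bound from below the number $Q(n,d-1)$ of $(d-1)$-ary quasigroups of order $n$.

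For even $n=2m$ the engine is a doubling construction. Identify $\{1,\dots,n\}$ with $\{1,\dots,m\}\times\{0,1\}$, write each argument as $x=(x',x'')$, fix any $(d-1)$-ary quasigroup $\phi$ of order $m$ (for instance $\phi(x_1',\dots,x_{d-1}')=x_1'+\dots+x_{d-1}'\bmod m$), and for every function $\psi\colon\{1,\dots,m\}^{d-1}\to\{0,1\}$ put
\[
 F_\psi(x_1,\dots,x_{d-1})=\bigl(\phi(x_1',\dots,x_{d-1}'),\; x_1''\oplus\dots\oplus x_{d-1}''\oplus\psi(x_1',\dots,x_{d-1}')\bigr).
\]
A direct check shows that $F_\psi$ is a $(d-1)$-ary quasigroup --- fixing all arguments but the $j$-th, the first coordinate is a bijection in $x_j'$ because $\phi$ is one, and then the second coordinate is a bijection in $x_j''$ --- and distinct $\psi$ yield distinct $F_\psi$ (evaluate at arguments with all second components $0$). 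Therefore $Q(n,d-1)\ge 2^{m^{d-1}}$, that is, $\log_2 V(n,d)\ge (n/2)^{d-1}$, which is the claimed bound for even $n\ge 6$; with $m=2$ it already produces the leading term $2^{d-1}$ of the $n=4$ estimate.

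It remains to sharpen $n=4$ and to handle odd $n$. For $n=4$ one adds the lower-order terms and, for the equality sign, proves a matching upper bound; this is where the structure theory of $n$-ary quasigroups of order $4$ is needed --- every such quasigroup is shown to be reducible (a superposition of lower-arity quasigroups), and counting the reducible ones precisely gives $\log_2 Q(4,k)=2^{k}+(k+1)\log_2 3+1+o(1)$, hence the stated formula with $k=d-1$. For odd $n=2a+1\ge 7$ the doubling construction fails verbatim because one element stays unpaired; one instead pairs the other $2a$ elements and introduces the free function $\psi$ on pairs of coordinate directions rather than single ones, which costs a square root in the exponent and replaces the factor $m$ by $(a-1)a=\tfrac{n-3}{2}\cdot\tfrac{n-1}{2}$, yielding the stated bound. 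For $n=5$ the generic odd bound is beaten by a tailored construction over a fixed ternary quasigroup of order $5$ admitting many high-arity extensions, giving $3^{(d-1)/3}$ (the additive $-0.072$ being a small-case correction). All the constructions above are carried out in \cite{PotKrot.numbernary}.

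The lower bound for even $n$ is the easy part: the doubling construction is transparent and produces no overcounting. The two real obstacles are the matching upper bound for $n=4$, which genuinely requires the classification of order-$4$ $n$-ary quasigroups, and the odd-order constructions, where both treating the unpaired element and ensuring that different recipes give genuinely different quasigroups (so that the count is not inflated) require care; locating the optimal base quasigroup for $n=5$ is essentially an exhaustive search over small cases.
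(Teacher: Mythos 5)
The paper does not actually prove this theorem: the sentence preceding it makes clear that these are the Potapov--Krotov bounds on the number of $d$-dimensional permutations of order $n$, imported verbatim via the observation (made earlier in the paper) that every multidimensional permutation is a vertex, so $V(n,d)$ is at least the number of such permutations. Your reduction to counting $(d-1)$-ary quasigroups of order $n$ is exactly the implicit justification, and your doubling construction for even $n$ is a correct, self-contained derivation of the bound $\log_2 V(n,d)\ge (n/2)^{d-1}$; for odd $n$ and for $n=5$ you, like the paper, ultimately defer to \cite{PotKrot.numbernary}, which is acceptable here.

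There is, however, one genuine gap in your treatment of the $n=4$ case, and it is worth flagging because it also exposes a defect in the statement itself. The asymptotic \emph{equality} $\log_2 V(4,d)=2^{d-1}+d\log_2 3+1+o(1)$ requires an upper bound on the number of \emph{vertices} of $\Omega_4^d$, whereas the matching upper bound you invoke (via the classification of $n$-ary quasigroups of order $4$ as reducible) only bounds the number of multidimensional \emph{permutations}. Since $\Omega_4^d$ has many vertices that are not $(0,1)$-matrices (the paper itself constructs non-permutation vertices, e.g.\ by Kronecker and dot products with $V_3^3$, and an additive $o(1)$ in the logarithm tolerates only a $(1+o(1))$ multiplicative excess), the equality cannot follow from quasigroup counting alone. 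Read as a statement about the number of $d$-dimensional permutations of order $4$ --- which is evidently what is intended --- your sketch is the right one; read literally as a statement about $V(4,d)$, neither your argument nor the cited source establishes the upper bound, and only the inequality $\ge$ survives.
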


Note that for every $d$ the $d$-dimensional permutation of order $3$ is unique up to the equivalence.

From Proposition~\ref{polytopeupperbound}, we deduce the following:

\begin{utv}
If $n$ is fixed, then  for the logarithm number $V(n,d)$ of vertices of $d$-dimensional polystochastic matrices of order $n$ we have
$$\log_2 V(n,d) \leq \frac{d(n-1)^d}{2}  (\log_2 n - \log_2 (n-1)) + \frac{(n-1)^d}{2} (1 + \log_2 e) + o ((n-1)^d).$$
\end{utv}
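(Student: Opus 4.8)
The plan is to derive the bound directly from Proposition~\ref{polytopeupperbound}; there is no real obstacle here, only a short asymptotic computation, whose single delicate point is flagged at the end. We may assume $n \ge 3$: for $n = 2$ the two binomial coefficients in Proposition~\ref{polytopeupperbound} both equal $\binom{2^d-1}{2^d-1} = 1$, so that bound already reads $V(2,d) \le 2$ and the claimed inequality is trivial.

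Write $a = n^d$ and $b = (n-1)^d$, so that $n^d - (n-1)^d = a - b$ and each of the two binomial coefficients in Proposition~\ref{polytopeupperbound} has the form $\binom{a-c}{a-b}$ with $c = \lfloor (b+1)/2 \rfloor$ or $c = \lfloor (b+2)/2 \rfloor$; in either case $c = \tfrac{b}{2} + O(1)$. Since the two terms differ by at most a factor $2$, it suffices to bound one coefficient $\binom{a-c}{a-b}$ and then add $\log_2 2 = 1$ to the logarithm, which is absorbed by the error term. I would rewrite $\binom{a-c}{a-b} = \binom{a-c}{b-c}$ and apply the elementary estimate $\binom{N}{j} \le (eN/j)^j$ — precisely what produces the constant $\log_2 e$ in the final bound — with $N = a-c$ and $j = b-c = \tfrac{b}{2} + O(1)$, obtaining
$$\log_2 \binom{a-c}{b-c} \le \Bigl( \tfrac{b}{2} + O(1) \Bigr)\Bigl( \log_2 e + \log_2 (a-c) - \log_2\bigl( \tfrac{b}{2} + O(1) \bigr) \Bigr).$$

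Next I would simplify the two logarithms. Because $n$ is fixed and $d \to \infty$, we have $b/a = \bigl((n-1)/n\bigr)^d \to 0$, hence $a - c = n^d\bigl(1 + o(1)\bigr)$ and $\log_2(a-c) = d\log_2 n + o(1)$; likewise $\log_2\bigl(\tfrac{b}{2} + O(1)\bigr) = \log_2 b - 1 + o(1) = d\log_2(n-1) - 1 + o(1)$. Thus the bracketed factor equals $d\log_2 n - d\log_2(n-1) + 1 + \log_2 e + o(1)$. Multiplying by $\tfrac{b}{2} + O(1) = \tfrac{1}{2}(n-1)^d + O(1)$, and using that the $o(1)$ inside the bracket contributes $o\bigl((n-1)^d\bigr)$ while the additive $O(1)$ in front multiplies a $\Theta(d)$ quantity and so contributes only $O(d) = o\bigl((n-1)^d\bigr)$, I would arrive at
$$\log_2 V(n,d) \le \frac{d(n-1)^d}{2}\bigl(\log_2 n - \log_2(n-1)\bigr) + \frac{(n-1)^d}{2}(1 + \log_2 e) + o\bigl((n-1)^d\bigr),$$
which is the assertion.

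The only slightly delicate point is that everything above rests on $\bigl((n-1)/n\bigr)^d \to 0$ (which makes $\log_2(a-c)$ collapse to $d\log_2 n + o(1)$) together with $b = (n-1)^d$ being exponentially large in $d$ (which makes the $O(1)$- and $O(d)$-sized corrections genuinely $o\bigl((n-1)^d\bigr)$); this is exactly where the hypothesis that $n$ is fixed is used.
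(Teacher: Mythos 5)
Your derivation is correct and is essentially the paper's own argument: the paper states this proposition as a direct consequence of Proposition~\ref{polytopeupperbound} without writing out the computation, and your route (reducing to one binomial coefficient $\binom{a-c}{b-c}$ with $c=\tfrac{b}{2}+O(1)$ and applying $\binom{N}{j}\le (eN/j)^j$, which is exactly where the constant $1+\log_2 e$ comes from) is the intended one. The separate treatment of $n=2$ and the check that the $O(1)$ and $O(d)$ corrections are $o((n-1)^d)$ are careful touches the paper leaves implicit.
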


A comparison of several lower and upper bounds on the number of vertices of $3$-dimensional polystochastic matrices was also given in~\cite{Zhangx2.extrempoints}.

\section{Constructions of vertices by matrix products}

Let $A$ be a $d$-dimensional matrix of order $n_1$ and $B$ be a $d$-dimensional  matrix of order $n_2$.  Then the \textit{Kronecker product} $A\otimes B$ of matrices $A$ and $B$ is the $d$-dimensional matrix $C$ of order $n_1 n_2$ with entries $c_{\gamma} = a_\alpha b_\beta,$
where  $\gamma_i = (\alpha_i - 1) n_2 + \beta_i$ for each $i = 1, \ldots, d$. 

In~\cite{my.permofproduct} it was proved that the Kronecker product of polystochastic matrices is a polystochastic matrix.

\begin{teorema}
If $A$ is a $d$-dimensional permutation of order $n_1$, and $B$ is a  vertex of $\Omega^d_{n_2}$, then $A \otimes B$ is a vertex of $\Omega^d_{n_1 n_2}$.
\end{teorema}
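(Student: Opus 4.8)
The plan is to verify the incidence-matrix criterion of Proposition~\ref{vertexcrit} for $C:=A\otimes B$, exploiting the product structure of the index set. Identify each index $\gamma\in I^d_{n_1n_2}$ with a pair $\gamma\leftrightarrow(\alpha,\beta)$, where $\alpha\in I^d_{n_1}$, $\beta\in I^d_{n_2}$, via $\gamma_i=(\alpha_i-1)n_2+\beta_i$. Since $A$ is a $(0,1)$-matrix, $c_\gamma=a_\alpha b_\beta$ is nonzero exactly when $\alpha\in supp(A)$ and $\beta\in supp(B)$, so $supp(C)$ is the ``rectangle'' $supp(A)\times supp(B)$ and on it $c_{(\alpha,\beta)}=b_\beta$. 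Moreover $C\in\Omega^d_{n_1n_2}$, because $A$ and $B$ are polystochastic and the Kronecker product of polystochastic matrices is polystochastic. By Proposition~\ref{vertexcrit} it then suffices to prove that the only real $d$-dimensional matrix $D$ of order $n_1n_2$ with $supp(D)\subseteq supp(C)$ and all line sums equal to $0$ is $D=0$ (equivalently, that the columns of $I(C)$ are linearly independent).

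Given such a $D$, for each $\alpha\in supp(A)$ define the $d$-dimensional matrix $D^{(\alpha)}$ of order $n_2$ by $d^{(\alpha)}_\beta:=d_{(\alpha,\beta)}$; then $supp(D^{(\alpha)})\subseteq supp(B)$. The key step is that every line of $D^{(\alpha)}$ sums to $0$. Fix a direction $i$ and values $\beta_j$ for $j\neq i$, and look at the line of $D$ in direction $i$ through the index with $\gamma_j\leftrightarrow(\alpha_j,\beta_j)$ for $j\neq i$. Its entries are indexed by $\gamma_i\leftrightarrow(\alpha_i',\beta_i')$ with $(\alpha_i',\beta_i')$ ranging over all pairs; such an index lies in $supp(D)\subseteq supp(A)\times supp(B)$ only if $(\alpha_1,\ldots,\alpha_i',\ldots,\alpha_d)\in supp(A)$, and since $A$ is a $d$-dimensional permutation this forces $\alpha_i'=\alpha_i$. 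Hence the vanishing line sum of $D$ reduces to $\sum_{\beta_i'}d^{(\alpha)}_{(\beta_1,\ldots,\beta_i',\ldots,\beta_d)}=0$, which is the corresponding line sum of $D^{(\alpha)}$. So $D^{(\alpha)}$ has support inside $supp(B)$ with all line sums zero; as $B$ is a vertex, Proposition~\ref{vertexcrit} gives $D^{(\alpha)}=0$. Since this holds for all $\alpha\in supp(A)$ and $D$ vanishes outside $supp(A)\times supp(B)$, we conclude $D=0$, so $C$ is a vertex.

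The crux --- and the only place the hypothesis that $A$ is a \emph{permutation} rather than an arbitrary vertex enters --- is this reduction: a $(0,1)$ polystochastic matrix has exactly one nonzero entry per line, so each line of $C$ meets $supp(C)$ only along a single ``fibre'' $\{\alpha\}\times(\text{one line of }B)$, turning a line constraint on $D$ into a line constraint on a single copy $D^{(\alpha)}$ of a matrix supported in $supp(B)$. For a general polystochastic $A$, a line of $C$ would meet several fibres, the constraints would couple the various $D^{(\alpha)}$, and the argument would fail. The remaining ingredients are purely bookkeeping: the index identification and the cited fact that Kronecker products of polystochastic matrices are polystochastic. Equivalently, one could run the argument through the ``unique polystochastic matrix with given support'' criterion: any $C'\in\Omega^d_{n_1n_2}$ with $supp(C')\subseteq supp(C)$ splits, over $\alpha\in supp(A)$, into polystochastic slices supported in $supp(B)$, each therefore equal to $B$, whence $C'=C$.
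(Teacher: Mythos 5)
Your proof is correct and rests on the same key observation as the paper's: because $A$ is a permutation, each line of $C=A\otimes B$ meets $supp(C)$ inside only one block, so any ``defect'' of $C$ restricts blockwise to a defect of $B$. The paper phrases this via convex combinations (a nontrivial decomposition of $C$ induces a nontrivial decomposition of some block equal to $B$), while you use the equivalent kernel/incidence-matrix criterion of Proposition~\ref{vertexcrit}; the substance is the same.
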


\begin{proof}
Since $A$ is a permutation matrix, then the Kronecker product $C = A \otimes B$ has  a block structure, where each block is the matrix $B$ or the $d$-dimensional zero matrix of order $n_2$. In other words,  if $a_{\alpha} = 1$, then  $c_{\gamma} = b_\beta,$  where  $\gamma_i = (\alpha_i - 1) n_2 + \beta_i$ for each $i = 1, \ldots, d$, and if $a_{\alpha} = 1$, then  $c_{\gamma} = 0$ for all indices $\beta$ and $\gamma$ such that    $\gamma_i = (\alpha_i - 1) n_2 + \beta_i$ for each $i = 1, \ldots, d$.

If $C = A \otimes B$ is not a vertex of $\Omega_{n_1 n_2}^d$, then there exists a nontrivial   convex combination $C = \lambda_1 D_1  + \lambda_2 D_2$, where  $D_1$ and $D_2 $ are polystochastic matrices that are different from $C$. Then there exists a block $B$ in matrix $C$, for which we have a nontrivial convex combination $B = \lambda_1 D'_1  + \lambda_2 D'_2$. Note that matrices $D'_1$ and $D'_2$ are polystochastic because $C$ is a polystochastic matrix and all other blocks from the same line as a block $B$ in $C$ are zero matrices. Thus, we obtain a nontrivial convex combination for the vertex $B$ of $\Omega_{n_2}^d$: a contradiction.
\end{proof}

\begin{zam}
The Kronecker product of two arbitrary vertices of the Birkhoff polytope is not necessarily a vertex. For example,  the product $V^3_3 \otimes V_3^3$, where $V_3^3 \in \Omega_3^3$, is not a vertex of $\Omega^3_9$. 
\end{zam}

 Let $A$  be a $d_1$-dimensional matrix of order $n$  and $B$ be  a $d_2$-dimensional matrix of order $n$. We define the \textit{dot product} $A \cdot B$ of matrices $A$ and $B$ to be the $(d_1 + d_2 - 2)$-dimensional matrix  $C$  such  that $c_\gamma = \sum\limits_{i=1}^n a_{\alpha i} b_{i \beta}$, where $\gamma = \alpha \beta$ is the concatenation of indices $\alpha \in I_n^{d_1 -1}$ and $\beta \in I_n^{d_2 -1}$.
 
 In~\cite{my.permofproduct}, it was proved that the dot product of polystochastic matrices is a polystochastic matrix.
 
 \begin{lemma} \label{planeofdot}
Let  $A$ be a $d_1$-dimensional permutation matrix of order $n$ and $B$ be a $d_2$-dimensional matrix of order $n$. Then every $d_2$-dimensional plane of the dot product $A \cdot B$ is a matrix equivalent to the matrix $B$.  
 \end{lemma}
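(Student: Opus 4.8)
The plan is to exploit that a permutation matrix $A$ makes the contraction in $A\cdot B$ trivial, so that $A\cdot B$ becomes a patchwork of relabelled copies of $B$. Write an index of $C=A\cdot B$ as $\gamma=\alpha\beta$ with $\alpha=(\alpha_1,\ldots,\alpha_{d_1-1})\in I_n^{d_1-1}$ and $\beta\in I_n^{d_2-1}$, so that $c_{\alpha\beta}=\sum_{i=1}^n a_{\alpha i}\,b_{i\beta}$. Since $A$ is a $d_1$-dimensional permutation matrix, every line of $A$ in the last direction contains exactly one $1$; for each $\alpha$ let $\sigma(\alpha)\in\{1,\ldots,n\}$ denote the position of this $1$. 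Then $c_{\alpha\beta}=b_{\sigma(\alpha)\,\beta}$ for all $\alpha$ and $\beta$.

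I read the ``$d_2$-dimensional planes of $A\cdot B$'' in the statement as the planes that keep all $d_2-1$ of the $\beta$-coordinates free, i.e.\ the planes obtained by fixing $d_1-2$ of the $\alpha$-coordinates to constants; for an unrestricted $d_2$-dimensional plane equivalence with $B$ cannot be expected, since such a plane may record only a single first-direction hyperplane of $B$. Fix such a plane $P$. Applying a transpose to $C$, we may assume that the free $\alpha$-coordinate of $P$ is $\alpha_1$, the remaining ones being fixed to $\alpha_2=a_2,\ldots,\alpha_{d_1-1}=a_{d_1-1}$.

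Now consider the $2$-dimensional plane $Q$ of $A$ obtained by fixing the same coordinates, that is, $Q_{\alpha_1 i}=a_{(\alpha_1,a_2,\ldots,a_{d_1-1})\,i}$. Every plane of a polystochastic matrix is again polystochastic, because each of its lines is a line of the ambient matrix; hence $Q$ is a $2$-dimensional polystochastic $(0,1)$-matrix, and a doubly stochastic $(0,1)$-matrix is a permutation matrix. Let $\tau$ be the permutation of $\{1,\ldots,n\}$ with $Q_{\alpha_1 i}=1\iff i=\tau(\alpha_1)$. Substituting into the formula for $C$,
$$P_{\alpha_1\beta}=c_{(\alpha_1,a_2,\ldots,a_{d_1-1})\,\beta}=\sum_{i=1}^n Q_{\alpha_1 i}\,b_{i\beta}=b_{\tau(\alpha_1)\,\beta},$$
so $P$ is exactly $B$ with the hyperplanes in its first direction permuted by $\tau$, hence equivalent to $B$. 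Reversing the auxiliary transpose gives the same conclusion when the free coordinate of $P$ is a different $\alpha$-coordinate.

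The argument is mostly bookkeeping once this structural picture is in place; the only point that requires care is keeping track of the concatenated index $\gamma=\alpha\beta$ and of which $\alpha$-coordinates have been fixed. The single substantive ingredient is the observation that a $2$-dimensional plane of a multidimensional permutation matrix is itself a permutation matrix --- precisely what makes the contraction $\sum_i a_{\alpha i}b_{i\beta}$ collapse to a single entry of $B$.
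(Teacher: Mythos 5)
Your argument is correct, and it is considerably more explicit than what the paper offers: the paper's ``proof'' is only the one-line claim that the lemma follows by induction on $d_1$ from the definition of the dot product. Your direct computation --- observing that $c_{\alpha\beta}=b_{\sigma(\alpha)\beta}$ because each line of $A$ in the last direction carries a single $1$, and that restricting $\sigma$ to a line of $\alpha$-indices yields a permutation $\tau$ because every $2$-dimensional plane of a multidimensional permutation is a permutation matrix --- is exactly the content that the induction would unwind, so the two routes are morally the same; yours simply does the bookkeeping in one step. You also correctly flag a real issue with the statement itself: taken literally, ``every $d_2$-dimensional plane'' is false. A plane that fixes one of the $\beta$-coordinates and frees two $\alpha$-coordinates has the form $P_{\alpha_1\alpha_2\beta'}=b_{L(\alpha_1,\alpha_2)\beta' b}$ for a Latin square $L$, so each entry of a hyperplane of $B$ is repeated $n$ times; since equivalence preserves the multiset of entries, such a plane is generically not equivalent to $B$ (e.g.\ for $A=M_3^3$, $B=V_3^3$ it has three $1$-entries while $V_3^3$ has one). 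Your restricted reading --- planes keeping all $\beta$-coordinates and exactly one $\alpha$-coordinate free --- is the version actually used in the proof of Theorem~\ref{dotconstruction}, which only needs the existence of one plane equivalent to $B$, so nothing downstream is affected.
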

 
 \begin{proof}
 The lemma can be easily proved by induction on $d_1$ and by the definition of the dot product.
 \end{proof}

\begin{teorema} \label{dotconstruction}
If $A$ is a $d_1$-dimensional permutation of order $n$, and $B$ is a  vertex of $\Omega^{d_2}_{n}$, then $A \cdot B$ is a vertex of $\Omega^{d_1 + d_2 -2}_{n}$.
\end{teorema}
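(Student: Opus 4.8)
The plan is to imitate the convex-combination argument used above for the Kronecker product, exploiting that the $(d_2-1)$-dimensional slices of $C=A\cdot B$ obtained by fixing the first $d_1-1$ indices are literally hyperplanes of $B$. Since $A$ is a $d_1$-dimensional permutation, for each $\alpha\in I_n^{d_1-1}$ there is a unique $\pi(\alpha)\in\{1,\dots,n\}$ with $a_{\alpha\,\pi(\alpha)}=1$ and all other entries of that line equal to $0$; substituting into the definition of the dot product gives $c_{\alpha\beta}=b_{\pi(\alpha)\,\beta}$, so the slice $C_\alpha:=(c_{\alpha\beta})_{\beta\in I_n^{d_2-1}}$ is exactly the hyperplane $B_{\pi(\alpha)}$ of $B$ (a slice-level sharpening of Lemma~\ref{planeofdot}). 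I will also use the following immediate consequence of the Jurkat--Ryser results: a vertex $B$ of $\Omega_n^{d_2}$ is the \emph{unique} polystochastic matrix whose support is contained in $supp(B)$ --- otherwise, for another such $\widehat B$, all lines of $B-\widehat B$ sum to $0$ and $supp(B-\widehat B)\subseteq supp(B)$, so $B\pm\varepsilon(B-\widehat B)\in\Omega_n^{d_2}$ for small $\varepsilon>0$ and $B=\tfrac12(B+\varepsilon(B-\widehat B))+\tfrac12(B-\varepsilon(B-\widehat B))$ is a nontrivial convex combination. (One could instead verify the rank criterion of Proposition~\ref{vertexcrit}, but the route below is more transparent.)

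Suppose for contradiction that $C=\lambda D_1+(1-\lambda)D_2$ is a nontrivial convex combination with $D_1,D_2\in\Omega_n^{d_1+d_2-2}$; then $supp(D_t)\subseteq supp(C)$ for $t=1,2$. The heart of the argument is to show that every slice $D_t|_\alpha:=((D_t)_{\alpha\beta})_\beta$ equals $C_\alpha$. Fix $\alpha$ and a direction $k\in\{1,\dots,d_1-1\}$, and let $\ell$ be the line of those $\alpha'\in I_n^{d_1-1}$ agreeing with $\alpha$ outside coordinate $k$. The $2$-dimensional plane of $A$ spanned by coordinate $k$ and the contracted coordinate (with the remaining $d_1-2$ coordinates frozen to those of $\alpha$) is a permutation matrix, so $\pi$ restricts to a bijection $\ell\to\{1,\dots,n\}$; let $\alpha^{(j)}\in\ell$ be the index with $\pi(\alpha^{(j)})=j$. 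Now build the $d_2$-dimensional matrix $\widehat B$ of order $n$ whose hyperplane indexed by $j$ in the first direction is $\widehat B_j:=D_1|_{\alpha^{(j)}}$. Lines of $\widehat B$ inside a fixed hyperplane $\widehat B_j$ are lines of $D_1$ in the $\beta$-directions, hence sum to $1$; a line of $\widehat B$ in the first direction, obtained by fixing $\beta$ and varying $j$, has sum $\sum_{j=1}^n (D_1)_{\alpha^{(j)}\beta}=\sum_{\alpha'\in\ell}(D_1)_{\alpha'\beta}$, which is a line of $D_1$ in direction $k$ and hence also sums to $1$. Therefore $\widehat B\in\Omega_n^{d_2}$, and since $supp(\widehat B_j)=supp(D_1|_{\alpha^{(j)}})\subseteq supp(C_{\alpha^{(j)}})=supp(B_j)$ we get $supp(\widehat B)\subseteq supp(B)$. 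By uniqueness $\widehat B=B$, i.e. $D_1|_{\alpha'}=B_{\pi(\alpha')}=C_{\alpha'}$ for all $\alpha'\in\ell$; in particular $D_1|_\alpha=C_\alpha$.

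Since $\alpha$ was arbitrary (take $k=1$ for every $\alpha$), this yields $D_1=C$, and the same argument gives $D_2=C$, contradicting the nontriviality of the convex combination; hence $C=A\cdot B$ is a vertex of $\Omega_n^{d_1+d_2-2}$. I expect the main obstacle to be the middle paragraph: correctly matching each line equation of the would-be polystochastic matrix $\widehat B$ with a line of $D_1$ (this is where the bijectivity of $\pi$ on $\ell$, hence $d_1\ge2$, is crucial), and checking that a single line $\ell$ through each $\alpha$ already pins down $D_1|_\alpha$. Everything else is a direct unravelling of the definitions of the dot product and of multidimensional permutations; note that for $d_1=1$ the dot product $A\cdot B$ is merely a hyperplane of $B$, which need not be a vertex, so the hypothesis $d_1\ge 2$ is genuinely needed.
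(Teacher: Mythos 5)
Your proof is correct, and it is built on the same structural observation as the paper's --- that the $d_2$-dimensional planes of $C=A\cdot B$ obtained by fixing all but one of the first $d_1-1$ coordinates are, after the relabelling by $\pi$, copies of $B$ (this is exactly Lemma~\ref{planeofdot}). The difference lies in how the vertex property of $B$ is invoked. The paper restricts the would-be convex combination $C=\lambda_1D_1+\lambda_2D_2$ to a single such plane $\Gamma$ to produce a convex combination $B=\lambda_1D_1'+\lambda_2D_2'$ and appeals directly to the definition of a vertex; this is terser, but it implicitly needs $\Gamma$ to be chosen through an index where $D_1$ and $D_2$ actually differ, otherwise the induced combination is trivial and there is no contradiction. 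Your route sidesteps that issue: instead of arguing that the restricted combination is nontrivial, you restrict $D_1$ alone to such a plane, check that the result $\widehat B$ is a polystochastic matrix of the right dimension with $supp(\widehat B)\subseteq supp(B)$, and invoke the equivalent Jurkat--Ryser characterization (uniqueness of the polystochastic matrix with given support) to force $\widehat B=B$. Summing over all $\alpha$ then gives $D_1=C$ (and likewise $D_2=C$) globally, which is strictly more information and closes the nontriviality gap automatically. The verification that $\widehat B$ is polystochastic --- in particular matching the line of $\widehat B$ in the stacking direction with a line of $D_1$ in direction $k$, via the bijectivity of $\pi$ on $\ell$ --- is exactly the content of Lemma~\ref{planeofdot}, which the paper leaves unwritten, so you are effectively supplying the details the paper omits. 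In short: same idea, somewhat more elementary in its use of the vertex criterion, and more careful; the price is length, the gain is that no step is left to the reader.
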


\begin{proof}
If $C = A \cdot B$ is not a vertex of $\Omega^{d_1 + d_2 -2}_{n}$, then there exists a nontrivial   convex combination $C = \lambda_1 D_1  + \lambda_2 D_2$, where  $D_1$ and $D_2 $ are polystochastic matrices that are different from $C$.  By Lemma~\ref{planeofdot}, there is a $d_2$-dimensional plane $\Gamma $ of  $C$ that is equivalent to the matrix $B$. Note that in this case we have a nontrivial convex combination $B = \lambda_1 D'_1  + \lambda_2 D'_2$, where matrices $D_1$ and $D_2$ are polystochastic: a contradiction.
\end{proof}

\begin{zam}
The dot product of two arbitrary vertices of the Birkhoff polytope is not necessarily  a vertex. For example, the dot product $V^3_3 \cdot V_3^3$  of the vertex  $V_3^3 \in \Omega_3^3$  is not a vertex of $\Omega^4_3$. 
\end{zam}

\section*{Appendix. Vertices of the polytope $\Omega_3^d$}

In this section we list all 21 vertices of $\Omega_3^4$ providing one representative for each equivalence class and  calculate their permanents. Vertices of  $\Omega_3^4$  are exhausted by a computer search of possible supports. By Proposition~\ref{supportbound}, the size of the support of all vertices $\Omega_3^d$ of  is not greater than $65$, and the support of minimal size has a multidimensional permutation $M_3^4$.   Nonexistence of a  nontrivial convex combinations for each vertex is checked by the means of Proposition~\ref{vertexcrit}.

1. The multidimensional permutation:

$$
M_3^4 = \left( 
\begin{array}{ccc|ccc|ccc}
1 & 0 & 0 & 0 & 1 & 0 & 0 & 0 & 1 \\
0 & 1 & 0 & 0 & 0 & 1 & 1 & 0 & 0 \\
0 & 0 & 1 & 1 & 0 & 0 & 0 & 1 & 0 \\
\hline
0 & 1 & 0 & 0 & 0 & 1 & 1 & 0 & 0 \\
0 & 0 & 1 & 1 & 0 & 0 & 0 & 1 & 0 \\
1 & 0 & 0 & 0 & 1 & 0 & 0 & 0 & 1 \\
\hline
0 & 0 & 1 & 1 & 0 & 0 & 0 & 1 & 0 \\
1 & 0 & 0 & 0 & 1 & 0 & 0 & 0 & 1 \\
0 & 1 & 0 & 0 & 0 & 1 & 1 & 0 & 0 \\
\end{array}
\right)
$$
The size of its  support is $N(M_3^4) = 27$ and the permanent $\per (M_3^4) = 27$. 

2. 
$$
A_1 = \left( 
\begin{array}{ccc|ccc|ccc}
1 & 0 & 0 & 0 & 1 & 0 & 0 & 0 & 1 \\
0 & \nicefrac{1}{2} & \nicefrac{1}{2} & \nicefrac{1}{2} & 0 & \nicefrac{1}{2} & \nicefrac{1}{2} &  \nicefrac{1}{2} & 0 \\
0 & \nicefrac{1}{2} & \nicefrac{1}{2} & \nicefrac{1}{2} & 0 & \nicefrac{1}{2} & \nicefrac{1}{2} & \nicefrac{1}{2} & 0 \\
\hline
0 & \nicefrac{1}{2} & \nicefrac{1}{2} & \nicefrac{1}{2} & 0 & \nicefrac{1}{2} & \nicefrac{1}{2} & \nicefrac{1}{2} & 0 \\
1 & 0 & 0 & 0 & \nicefrac{1}{2} & \nicefrac{1}{2} & 0 & \nicefrac{1}{2} & \nicefrac{1}{2} \\
0 & \nicefrac{1}{2} & \nicefrac{1}{2} & \nicefrac{1}{2} & \nicefrac{1}{2} & 0 & \nicefrac{1}{2} & 0 & \nicefrac{1}{2} \\
\hline
0 & \nicefrac{1}{2} & \nicefrac{1}{2} & \nicefrac{1}{2} & 0 & \nicefrac{1}{2} & \nicefrac{1}{2} & \nicefrac{1}{2} & 0 \\
0 & \nicefrac{1}{2} & \nicefrac{1}{2} & \nicefrac{1}{2} & \nicefrac{1}{2} & 0 & \nicefrac{1}{2} & 0 & \nicefrac{1}{2} \\
1 & 0 & 0 & 0 & \nicefrac{1}{2} & \nicefrac{1}{2} & 0 & \nicefrac{1}{2} & \nicefrac{1}{2} \\
\end{array}
\right)
$$
The size of its  support is $N(A_1) = 49$ and the permanent $\per (A_1) = 10.5$.

3. The dot product of the permutation $M_3^3$ and the vertex $V_3^3$ (see Theorem~\ref{dotconstruction}):

$$
M_3^3 \cdot V^3_3 = \left( 
\begin{array}{ccc|ccc|ccc}
1 & 0 & 0 & 0 & \nicefrac{1}{2} & \nicefrac{1}{2} & 0 & \nicefrac{1}{2} & \nicefrac{1}{2} \\
0 & \nicefrac{1}{2} & \nicefrac{1}{2} & \nicefrac{1}{2} & 0 & \nicefrac{1}{2} & \nicefrac{1}{2} &  \nicefrac{1}{2} & 0 \\
0 & \nicefrac{1}{2} & \nicefrac{1}{2} & \nicefrac{1}{2} & \nicefrac{1}{2} & 0 & \nicefrac{1}{2} & 0 & \nicefrac{1}{2} \\
\hline
0 & \nicefrac{1}{2} & \nicefrac{1}{2} & 1 & 0 & 0 & 0 & \nicefrac{1}{2} & \nicefrac{1}{2} \\
\nicefrac{1}{2} & \nicefrac{1}{2} & 0 & 0 & \nicefrac{1}{2} & \nicefrac{1}{2} & \nicefrac{1}{2} & 0 & \nicefrac{1}{2} \\
\nicefrac{1}{2} & 0 & \nicefrac{1}{2} & 0 & \nicefrac{1}{2} & \nicefrac{1}{2} & \nicefrac{1}{2} & \nicefrac{1}{2} & 0\\
\hline
0 & \nicefrac{1}{2} & \nicefrac{1}{2} & 0 & \nicefrac{1}{2} & \nicefrac{1}{2} & 1 & 0 & 0 \\
\nicefrac{1}{2} & 0 & \nicefrac{1}{2} & \nicefrac{1}{2} & \nicefrac{1}{2} & 0 & 0 & \nicefrac{1}{2} & \nicefrac{1}{2} \\
\nicefrac{1}{2} & \nicefrac{1}{2} & 0 & \nicefrac{1}{2} & 0 & \nicefrac{1}{2} & 0 & \nicefrac{1}{2} & \nicefrac{1}{2} \\
\end{array}
\right)
$$
The size of its  support is $N(M_3^3 \cdot V^3_3) = 51$ and the permanent $\per (M_3^3 \cdot V^3_3) = 9$. 

4. 
$$
A_2 = \left( 
\begin{array}{ccc|ccc|ccc}
1 & 0 & 0 & 0 & \nicefrac{1}{2} & \nicefrac{1}{2} & 0 & \nicefrac{1}{2} & \nicefrac{1}{2} \\
0 & \nicefrac{1}{2} & \nicefrac{1}{2} & \nicefrac{1}{2} & 0 & \nicefrac{1}{2} & \nicefrac{1}{2} &  \nicefrac{1}{2} & 0 \\
0 & \nicefrac{1}{2} & \nicefrac{1}{2} & \nicefrac{1}{2} & \nicefrac{1}{2} & 0 & \nicefrac{1}{2} & 0 & \nicefrac{1}{2} \\
\hline
0 & \nicefrac{1}{2} & \nicefrac{1}{2} & \nicefrac{1}{2} & 0 & \nicefrac{1}{2} & \nicefrac{1}{2} &  \nicefrac{1}{2} & 0 \\
\nicefrac{1}{2} & 0 & \nicefrac{1}{2} & 0 & 1 & 0 & \nicefrac{1}{2} & 0 & \nicefrac{1}{2} \\
\nicefrac{1}{2} & \nicefrac{1}{2} & 0 & \nicefrac{1}{2} & 0 & \nicefrac{1}{2} & 0 & \nicefrac{1}{2} & \nicefrac{1}{2} \\
\hline
0 & \nicefrac{1}{2} & \nicefrac{1}{2} & \nicefrac{1}{2} & \nicefrac{1}{2} & 0 & \nicefrac{1}{2} & 0 & \nicefrac{1}{2} \\
\nicefrac{1}{2} & \nicefrac{1}{2} & 0 & \nicefrac{1}{2} & 0 & \nicefrac{1}{2} & 0 & \nicefrac{1}{2} & \nicefrac{1}{2} \\
\nicefrac{1}{2} & 0 & \nicefrac{1}{2} & 0 & \nicefrac{1}{2} & \nicefrac{1}{2} & \nicefrac{1}{2} & \nicefrac{1}{2} & 0 \\
\end{array}
\right)
$$
The size of its  support is $N(A_2) = 52$ and the permanent $\per (A_2) = 8.25$. 

5. 
$$
A_3 = \left( 
\begin{array}{ccc|ccc|ccc}
1 & 0 & 0 & 0 & \nicefrac{1}{3} & \nicefrac{2}{3} & 0 & \nicefrac{2}{3} & \nicefrac{1}{3} \\
0 & \nicefrac{2}{3} & \nicefrac{1}{3} & \nicefrac{2}{3} & \nicefrac{1}{3} & 0 & \nicefrac{1}{3} &  0 & \nicefrac{2}{3} \\
0 & \nicefrac{1}{3} & \nicefrac{2}{3} & \nicefrac{1}{3} & \nicefrac{1}{3} & \nicefrac{1}{3} & \nicefrac{2}{3} & \nicefrac{1}{3} & 0 \\
\hline
0 & \nicefrac{2}{3} & \nicefrac{1}{3} & \nicefrac{2}{3} & \nicefrac{1}{3} & 0 & \nicefrac{1}{3} &  0 & \nicefrac{2}{3} \\
\nicefrac{1}{3} & \nicefrac{1}{3} & \nicefrac{1}{3} & \nicefrac{1}{3} & 0 & \nicefrac{2}{3} & \nicefrac{1}{3} & \nicefrac{2}{3} & 0 \\
\nicefrac{2}{3} & 0 & \nicefrac{1}{3} & 0 & \nicefrac{2}{3} & \nicefrac{1}{3} & \nicefrac{1}{3} & \nicefrac{1}{3} & \nicefrac{1}{3} \\
\hline
0 & \nicefrac{1}{3} & \nicefrac{2}{3} & \nicefrac{1}{3} & \nicefrac{1}{3} & \nicefrac{1}{3} & \nicefrac{2}{3} & \nicefrac{1}{3} & 0 \\
\nicefrac{2}{3} & 0 & \nicefrac{1}{3} & 0 & \nicefrac{2}{3} & \nicefrac{1}{3} & \nicefrac{1}{3} & \nicefrac{1}{3} & \nicefrac{1}{3} \\
\nicefrac{1}{3} & \nicefrac{2}{3} & 0 & \nicefrac{2}{3} & 0 & \nicefrac{1}{3} & 0 & \nicefrac{1}{3} & \nicefrac{2}{3} \\
\end{array}
\right)
$$
The size of its  support is $N(A_3) = 58$ and the permanent $\per (A_3) =  \frac{256}{27} \approx 9.48$.

6. 
$$
A_4 = \left( 
\begin{array}{ccc|ccc|ccc}
1 & 0 & 0 & 0 & \nicefrac{2}{3} & \nicefrac{1}{3} & 0 & \nicefrac{1}{3} & \nicefrac{2}{3} \\
0 & \nicefrac{2}{3} & \nicefrac{1}{3} & \nicefrac{1}{3} & 0 & \nicefrac{2}{3} & \nicefrac{2}{3} &  \nicefrac{1}{3} & 0 \\
0 & \nicefrac{1}{3} & \nicefrac{2}{3} & \nicefrac{2}{3} & \nicefrac{1}{3} & 0 & \nicefrac{1}{3} & \nicefrac{1}{3} & \nicefrac{1}{3} \\
\hline
0 & \nicefrac{2}{3} & \nicefrac{1}{3} & \nicefrac{2}{3} & 0 & \nicefrac{1}{3} & \nicefrac{1}{3} &  \nicefrac{1}{3} & \nicefrac{1}{3} \\
\nicefrac{2}{3} & 0 & \nicefrac{1}{3} & \nicefrac{1}{3} & \nicefrac{1}{3} & \nicefrac{1}{3} & 0 & \nicefrac{2}{3} & \nicefrac{1}{3} \\
\nicefrac{1}{3} & \nicefrac{1}{3} & \nicefrac{1}{3} & 0 & \nicefrac{2}{3} & \nicefrac{1}{3} & \nicefrac{2}{3} & 0 & \nicefrac{1}{3} \\
\hline
0 & \nicefrac{1}{3} & \nicefrac{2}{3} & \nicefrac{1}{3} & \nicefrac{1}{3} & \nicefrac{1}{3} & \nicefrac{2}{3} & \nicefrac{1}{3} & 0 \\
\nicefrac{1}{3} & \nicefrac{1}{3} & \nicefrac{1}{3} & \nicefrac{1}{3} & \nicefrac{2}{3} & 0 & \nicefrac{1}{3} & 0 & \nicefrac{2}{3} \\
\nicefrac{2}{3} & \nicefrac{1}{3} & 0 & \nicefrac{1}{3} & 0 & \nicefrac{2}{3} & 0 & \nicefrac{2}{3} & \nicefrac{1}{3} \\
\end{array}
\right)
$$
The size of its  support is $N(A_4) = 59$ and the permanent $\per (A_4) =  \frac{85}{9} \approx 9.44$.

7. 
$$
A_5 = \left( 
\begin{array}{ccc|ccc|ccc}
1 & 0 & 0 & 0 & \nicefrac{2}{3} & \nicefrac{1}{3} & 0 & \nicefrac{1}{3} & \nicefrac{2}{3} \\
0 & \nicefrac{2}{3} & \nicefrac{1}{3} & \nicefrac{2}{3} & 0 & \nicefrac{1}{3} & \nicefrac{1}{3} &  \nicefrac{1}{3} & \nicefrac{1}{3} \\
0 & \nicefrac{1}{3} & \nicefrac{2}{3} & \nicefrac{1}{3} & \nicefrac{1}{3} & \nicefrac{1}{3} & \nicefrac{2}{3} & \nicefrac{1}{3} & 0 \\
\hline
0 & \nicefrac{2}{3} & \nicefrac{1}{3} & \nicefrac{2}{3} & 0 & \nicefrac{1}{3} & \nicefrac{1}{3} &  \nicefrac{1}{3} & \nicefrac{1}{3} \\
\nicefrac{2}{3} & 0 & \nicefrac{1}{3} & 0 & \nicefrac{1}{3} & \nicefrac{2}{3} & \nicefrac{1}{3} & \nicefrac{2}{3} & 0 \\
\nicefrac{1}{3} & \nicefrac{1}{3} & \nicefrac{1}{3} & \nicefrac{1}{3} & \nicefrac{2}{3} & 0 & \nicefrac{1}{3} & 0 & \nicefrac{2}{3} \\
\hline
0 & \nicefrac{1}{3} & \nicefrac{2}{3} & \nicefrac{1}{3} & \nicefrac{1}{3} & \nicefrac{1}{3} & \nicefrac{2}{3} & \nicefrac{1}{3} & 0 \\
\nicefrac{1}{3} & \nicefrac{1}{3} & \nicefrac{1}{3} & \nicefrac{1}{3} & \nicefrac{2}{3} & 0 & \nicefrac{1}{3} & 0 & \nicefrac{2}{3} \\
\nicefrac{2}{3} & \nicefrac{1}{3} & 0 & \nicefrac{1}{3} & 0 & \nicefrac{2}{3} & 0 & \nicefrac{2}{3} & \nicefrac{1}{3} \\
\end{array}
\right)
$$
The size of its  support is $N(A_5) = 59$ and the permanent $\per (A_5) =  \frac{85}{9} \approx 9.44$.

8. 
$$
A_6 = \left( 
\begin{array}{ccc|ccc|ccc}
1 & 0 & 0 & 0 & \nicefrac{1}{2} & \nicefrac{1}{2} & 0 & \nicefrac{1}{2} & \nicefrac{1}{2} \\
0 & \nicefrac{1}{2} & \nicefrac{1}{2} & \nicefrac{1}{2} & 0 & \nicefrac{1}{2} & \nicefrac{1}{2} &  \nicefrac{1}{2} & 0 \\
0 & \nicefrac{1}{2} & \nicefrac{1}{2} & \nicefrac{1}{2} & \nicefrac{1}{2} & 0 & \nicefrac{1}{2} & 0 & \nicefrac{1}{2} \\
\hline
0 & \nicefrac{3}{4} & \nicefrac{1}{4} & \nicefrac{3}{4} & 0 & \nicefrac{1}{4} & \nicefrac{1}{4} &  \nicefrac{1}{4} & \nicefrac{1}{2} \\
\nicefrac{3}{4} & 0 & \nicefrac{1}{4} & 0 & \nicefrac{1}{2} & \nicefrac{1}{2} & \nicefrac{1}{4} & \nicefrac{1}{2} & \nicefrac{1}{4} \\
\nicefrac{1}{4} & \nicefrac{1}{4} & \nicefrac{1}{2} & \nicefrac{1}{4} & \nicefrac{1}{2} & \nicefrac{1}{4} & \nicefrac{1}{2} & \nicefrac{1}{4} & \nicefrac{1}{4} \\
\hline
0 & \nicefrac{1}{4} & \nicefrac{3}{4} & \nicefrac{1}{4} & \nicefrac{1}{2} & \nicefrac{1}{4} & \nicefrac{3}{4} & \nicefrac{1}{4} & 0 \\
\nicefrac{1}{4} & \nicefrac{1}{2} & \nicefrac{1}{4} & \nicefrac{1}{2} & \nicefrac{1}{2} & 0 & \nicefrac{1}{4} & 0 & \nicefrac{3}{4} \\
\nicefrac{3}{4} & \nicefrac{1}{4} & 0 & \nicefrac{1}{4} & 0 & \nicefrac{3}{4} & 0 & \nicefrac{3}{4} & \nicefrac{1}{4} \\
\end{array}
\right)
$$
The size of its  support is $N(A_6) = 60$ and the permanent $\per (A_6) =  \frac{77}{8} = 9.625$.

9. 
$$
A_7 = \left( 
\begin{array}{ccc|ccc|ccc}
0 & \nicefrac{1}{2} & \nicefrac{1}{2} & \nicefrac{1}{2} & 0 & \nicefrac{1}{2} & \nicefrac{1}{2} & \nicefrac{1}{2} & 0 \\
\nicefrac{1}{2} & 0 & \nicefrac{1}{2} & 0 & \nicefrac{3}{4} & \nicefrac{1}{4} & \nicefrac{1}{2} &  \nicefrac{1}{4} & \nicefrac{1}{4} \\
\nicefrac{1}{2} & \nicefrac{1}{2} & 0 & \nicefrac{1}{2} & \nicefrac{1}{4} & \nicefrac{1}{4} & 0 & \nicefrac{1}{4} & \nicefrac{3}{4} \\
\hline
\nicefrac{1}{2} & 0 & \nicefrac{1}{2} & 0 & \nicefrac{3}{4} & \nicefrac{1}{4} & \nicefrac{1}{2} &  \nicefrac{1}{4} & \nicefrac{1}{4} \\
0 & \nicefrac{3}{4} & \nicefrac{1}{4} & \nicefrac{3}{4} & \nicefrac{1}{4} & 0 & \nicefrac{1}{4} & 0 & \nicefrac{1}{4} \\
\nicefrac{1}{2} & \nicefrac{1}{4} & \nicefrac{1}{4} & \nicefrac{1}{4} & 0 & \nicefrac{3}{4} & \nicefrac{1}{4} & \nicefrac{3}{4} & 0 \\
\hline
\nicefrac{1}{2} & \nicefrac{1}{2} & 0 & \nicefrac{1}{2} & \nicefrac{1}{4} & \nicefrac{1}{4} & 0 & \nicefrac{1}{4} & \nicefrac{3}{4} \\
\nicefrac{1}{2} & \nicefrac{1}{4} & \nicefrac{1}{4} & \nicefrac{1}{4} & 0 & \nicefrac{3}{4} & \nicefrac{1}{4} & \nicefrac{3}{4} & 0 \\
0 & \nicefrac{1}{4} & \nicefrac{3}{4} & \nicefrac{1}{4} & \nicefrac{3}{4} & 0 & \nicefrac{3}{4} & 0 & \nicefrac{1}{4} \\
\end{array}
\right)
$$
The size of its  support is $N(A_7) = 60$ and the permanent $\per (A_7) =  \frac{39}{4} = 9.75$.

10. 
$$
A_8 = \left( 
\begin{array}{ccc|ccc|ccc}
\nicefrac{1}{3} & \nicefrac{1}{3} & \nicefrac{1}{3} & \nicefrac{1}{3} & 0 & \nicefrac{2}{3} & \nicefrac{1}{3} & \nicefrac{2}{3} & 0 \\
\nicefrac{1}{3} & \nicefrac{1}{3} & \nicefrac{1}{3} & \nicefrac{2}{3} & \nicefrac{1}{3} & 0 & 0 &  \nicefrac{1}{3} & \nicefrac{2}{3} \\
\nicefrac{1}{3} & \nicefrac{1}{3} & \nicefrac{1}{3} & 0 & \nicefrac{2}{3} & \nicefrac{1}{3} & \nicefrac{2}{3} & 0 & \nicefrac{1}{3} \\
\hline
\nicefrac{2}{3} & 0 & \nicefrac{1}{3} & \nicefrac{1}{3} & \nicefrac{2}{3} & 0 & 0 &  \nicefrac{1}{3} & \nicefrac{2}{3} \\
0 & \nicefrac{1}{3} & \nicefrac{2}{3} & \nicefrac{1}{3} & \nicefrac{1}{3} & \nicefrac{1}{3} & \nicefrac{2}{3} & \nicefrac{1}{3} & 0 \\
\nicefrac{1}{3} & \nicefrac{2}{3} & 0 & \nicefrac{1}{3} & 0 & \nicefrac{2}{3} & \nicefrac{1}{3} & \nicefrac{1}{3} & \nicefrac{1}{3} \\
\hline
0 & \nicefrac{2}{3} & \nicefrac{1}{3} & \nicefrac{1}{3} & \nicefrac{1}{3} & \nicefrac{1}{3} & \nicefrac{2}{3} & 0 & \nicefrac{1}{3} \\
\nicefrac{2}{3} & \nicefrac{1}{3} & 0  & 0 & \nicefrac{1}{3} & \nicefrac{2}{3} & \nicefrac{1}{3} & \nicefrac{1}{3} & \nicefrac{1}{3} \\
\nicefrac{1}{3} & 0 & \nicefrac{2}{3} & \nicefrac{2}{3} & \nicefrac{1}{3} & 0 & 0 & \nicefrac{2}{3} & \nicefrac{1}{3} \\
\end{array}
\right)
$$
The size of its  support is $N(A_{8}) = 61$ and the permanent $\per (A_8) =  \frac{236}{27} \approx 8.74$.  This vertex of $\Omega_3^4$  was also found in~\cite{AhLoHem.polycones}.

11. 
$$
A_9 = \left( 
\begin{array}{ccc|ccc|ccc}
\nicefrac{1}{3} & \nicefrac{1}{3} & \nicefrac{1}{3} & \nicefrac{1}{3} & 0 & \nicefrac{2}{3} & \nicefrac{1}{3} & \nicefrac{2}{3} & 0 \\
\nicefrac{1}{3} & \nicefrac{1}{3} & \nicefrac{1}{3} & \nicefrac{2}{3} & \nicefrac{1}{3} & 0 & 0 &  \nicefrac{1}{3} & \nicefrac{2}{3} \\
\nicefrac{1}{3} & \nicefrac{1}{3} & \nicefrac{1}{3} & 0 & \nicefrac{2}{3} & \nicefrac{1}{3} & \nicefrac{2}{3} & 0 & \nicefrac{1}{3} \\
\hline
\nicefrac{1}{3} & \nicefrac{1}{3} & \nicefrac{1}{3} & 0 & \nicefrac{2}{3} & \nicefrac{1}{3} & \nicefrac{2}{3} &  0 & \nicefrac{1}{3} \\
\nicefrac{1}{3} & 0 & \nicefrac{2}{3} & \nicefrac{1}{3} & \nicefrac{1}{3} & \nicefrac{1}{3} & \nicefrac{1}{3} & \nicefrac{2}{3} & 0 \\
\nicefrac{1}{3} & \nicefrac{2}{3} & 0 & \nicefrac{2}{3} & 0 & \nicefrac{1}{3} & 0 & \nicefrac{1}{3} & \nicefrac{2}{3} \\
\hline
\nicefrac{1}{3} & \nicefrac{1}{3} & \nicefrac{1}{3} & \nicefrac{2}{3} & \nicefrac{1}{3} & 0 & 0 & \nicefrac{1}{3} & \nicefrac{2}{3} \\
\nicefrac{1}{3} & \nicefrac{2}{3} & 0  & 0 & \nicefrac{1}{3} & \nicefrac{2}{3} & \nicefrac{2}{3} & 0 & \nicefrac{1}{3} \\
\nicefrac{1}{3} & 0 & \nicefrac{2}{3} & \nicefrac{1}{3} & \nicefrac{1}{3} & \nicefrac{1}{3} & \nicefrac{1}{3} & \nicefrac{2}{3} & 0 \\
\end{array}
\right)
$$
The size of its  support is $N(A_{9}) = 61$ and the permanent $\per (A_9) =  \frac{236}{27} \approx 8.74$.

12. 
$$
A_{10} = \left( 
\begin{array}{ccc|ccc|ccc}
1 & 0 & 0 & 0 & \nicefrac{1}{2} & \nicefrac{1}{2} & 0 & \nicefrac{1}{2} & \nicefrac{1}{2} \\
0 & \nicefrac{1}{2} & \nicefrac{1}{2} & \nicefrac{1}{2} & 0 & \nicefrac{1}{2} & \nicefrac{1}{2} &  \nicefrac{1}{2} & 0 \\
0 & \nicefrac{1}{2} & \nicefrac{1}{2} & \nicefrac{1}{2} & \nicefrac{1}{2} & 0 & \nicefrac{1}{2} & 0 & \nicefrac{1}{2} \\
\hline
0 & \nicefrac{1}{4} & \nicefrac{3}{4} & \nicefrac{3}{4} & \nicefrac{1}{4} & 0 & \nicefrac{1}{4} &  \nicefrac{1}{2} & \nicefrac{1}{4} \\
\nicefrac{3}{4} & \nicefrac{1}{4} & 0 & 0 & \nicefrac{1}{2} & \nicefrac{1}{2} & \nicefrac{1}{4} & \nicefrac{1}{4} & \nicefrac{1}{2} \\
\nicefrac{1}{4} & \nicefrac{1}{2} & \nicefrac{1}{4} & \nicefrac{1}{4} & \nicefrac{1}{4} & \nicefrac{1}{2} & \nicefrac{1}{2} & \nicefrac{1}{4} & \nicefrac{1}{4} \\
\hline
0 & \nicefrac{3}{4} & \nicefrac{1}{4} & \nicefrac{1}{4} & \nicefrac{1}{4} & \nicefrac{1}{2} & \nicefrac{3}{4} & 0 & \nicefrac{1}{4} \\
\nicefrac{1}{4} & \nicefrac{1}{4} & \nicefrac{1}{2}  & \nicefrac{1}{2} & \nicefrac{1}{2} & 0 & \nicefrac{1}{4} & \nicefrac{1}{4} & \nicefrac{1}{2} \\
\nicefrac{3}{4} & 0 & \nicefrac{1}{4} & \nicefrac{1}{4} & \nicefrac{1}{4} & \nicefrac{1}{2} & 0 & \nicefrac{3}{4} & \nicefrac{1}{4} \\
\end{array}
\right)
$$
The size of its  support is $N(A_{10}) = 62$ and the permanent $\per (A_{10}) =  \frac{75}{8} =  9.375$.

13. 
$$
A_{11} = \left( 
\begin{array}{ccc|ccc|ccc}
1 & 0 & 0 & 0 & \nicefrac{3}{5} & \nicefrac{2}{5} & 0 & \nicefrac{2}{5} & \nicefrac{3}{5} \\
0 & \nicefrac{3}{5} & \nicefrac{2}{5} & \nicefrac{3}{5} & 0 & \nicefrac{2}{5} & \nicefrac{2}{5} &  \nicefrac{2}{5} & \nicefrac{1}{5} \\
0 & \nicefrac{2}{5} & \nicefrac{3}{5} & \nicefrac{2}{5} & \nicefrac{2}{5} & \nicefrac{1}{5} & \nicefrac{3}{5} & \nicefrac{1}{5} & \nicefrac{1}{5} \\
\hline
0 & \nicefrac{3}{5} & \nicefrac{2}{5} & \nicefrac{3}{5} & 0 & \nicefrac{2}{5} & \nicefrac{2}{5} &  \nicefrac{2}{5} & \nicefrac{1}{5} \\
\nicefrac{3}{5} & 0 & \nicefrac{2}{5} & 0 & \nicefrac{2}{5} & \nicefrac{3}{5} & \nicefrac{2}{5} & \nicefrac{3}{5} & 0 \\
\nicefrac{2}{5} & \nicefrac{2}{5} & \nicefrac{1}{5} & \nicefrac{2}{5} & \nicefrac{3}{5} & 0 & \nicefrac{1}{5} & 0 & \nicefrac{4}{5} \\
\hline
0 & \nicefrac{2}{5} & \nicefrac{3}{5} & \nicefrac{2}{5} & \nicefrac{2}{5} & \nicefrac{1}{5} & \nicefrac{3}{5} & \nicefrac{1}{5} & \nicefrac{1}{5} \\
\nicefrac{2}{5} & \nicefrac{2}{5} & \nicefrac{1}{5}  & \nicefrac{2}{5} & \nicefrac{3}{5} & 0 & \nicefrac{1}{5} & 0 & \nicefrac{4}{5} \\
\nicefrac{3}{5} & \nicefrac{1}{5} & \nicefrac{1}{5} & \nicefrac{1}{5} & 0 & \nicefrac{4}{5} & \nicefrac{1}{5} & \nicefrac{4}{5} & 0 \\
\end{array}
\right)
$$
The size of its  support is $N(A_{11}) = 62$ and the permanent $\per (A_{11}) =  \frac{1194}{125} =  9.552$.

14. 
$$
A_{12} = \left( 
\begin{array}{ccc|ccc|ccc}
\nicefrac{1}{2} & \nicefrac{1}{4} & \nicefrac{1}{4} & \nicefrac{1}{2} & 0 & \nicefrac{1}{2} & 0 & \nicefrac{3}{4} & \nicefrac{1}{4} \\
\nicefrac{1}{4} & \nicefrac{1}{2} & \nicefrac{1}{4} & \nicefrac{1}{2} & \nicefrac{1}{2} & 0 & \nicefrac{1}{4} & 0 & \nicefrac{3}{4} \\
\nicefrac{1}{4} & \nicefrac{1}{4} & \nicefrac{1}{2} & 0 & \nicefrac{1}{2} & \nicefrac{1}{2} & \nicefrac{3}{4} & \nicefrac{1}{4} & 0 \\
\hline
\nicefrac{1}{2} & \nicefrac{1}{2} & 0 & \nicefrac{1}{4} & \nicefrac{1}{4} & \nicefrac{1}{2} & \nicefrac{1}{4} &  \nicefrac{1}{4} & \nicefrac{1}{2} \\
0 & \nicefrac{1}{2} & \nicefrac{1}{2} & \nicefrac{1}{4} & \nicefrac{1}{4} & \nicefrac{1}{2} & \nicefrac{3}{4} & \nicefrac{1}{4} & 0 \\
\nicefrac{1}{2} & 0 & \nicefrac{1}{2} & \nicefrac{1}{2} & \nicefrac{1}{2} & 0 & 0 & \nicefrac{1}{2} & \nicefrac{1}{2} \\
\hline
0 & \nicefrac{1}{4} & \nicefrac{3}{4} & \nicefrac{1}{4} & \nicefrac{3}{4} & 0 & \nicefrac{3}{4} & 0 & \nicefrac{1}{4} \\
\nicefrac{3}{4} & 0 & \nicefrac{1}{4}  & \nicefrac{1}{4} & \nicefrac{1}{4} & \nicefrac{1}{2} & 0 & \nicefrac{3}{4} & \nicefrac{1}{4} \\
\nicefrac{1}{4} & \nicefrac{3}{4} & 0 & \nicefrac{1}{2} & 0 & \nicefrac{1}{2} & \nicefrac{1}{4} & \nicefrac{1}{4} & \nicefrac{1}{2} \\
\end{array}
\right)
$$
The size of its  support is $N(A_{12}) = 62$ and the permanent $\per (A_{12}) =  \frac{73}{8} =  9.125$.

15. 
$$
A_{13} = \left( 
\begin{array}{ccc|ccc|ccc}
\nicefrac{1}{3} & \nicefrac{1}{3} & \nicefrac{1}{3} & \nicefrac{1}{3} & 0 & \nicefrac{2}{3} & \nicefrac{1}{3} & \nicefrac{2}{3} & 0 \\
\nicefrac{1}{3} & \nicefrac{1}{3} & \nicefrac{1}{3} & 0 & \nicefrac{2}{3} & \nicefrac{1}{3} & \nicefrac{2}{3} & 0 & \nicefrac{1}{3} \\
\nicefrac{1}{3} & \nicefrac{1}{3} & \nicefrac{1}{3} & \nicefrac{2}{3} & \nicefrac{1}{3} & 0 & 0 & \nicefrac{1}{3} & \nicefrac{2}{3} \\
\hline
\nicefrac{1}{3} & \nicefrac{1}{3} & \nicefrac{1}{3} & \nicefrac{1}{3} & \nicefrac{1}{3} & \nicefrac{1}{3} & \nicefrac{1}{3} &  \nicefrac{1}{3} & \nicefrac{1}{3} \\
\nicefrac{1}{3} & 0 & \nicefrac{2}{3} & \nicefrac{2}{3} & \nicefrac{1}{3} & 0 & 0 & \nicefrac{2}{3} & \nicefrac{1}{3} \\
\nicefrac{1}{3} & \nicefrac{2}{3} & 0 & 0 & \nicefrac{1}{3} & \nicefrac{2}{3} & \nicefrac{2}{3} & 0 & \nicefrac{1}{3} \\
\hline
\nicefrac{1}{3} & \nicefrac{1}{3} & \nicefrac{1}{3} & \nicefrac{1}{3} & \nicefrac{2}{3} & 0 & \nicefrac{1}{3} & 0 & \nicefrac{2}{3} \\
\nicefrac{1}{3} & \nicefrac{2}{3} & 0  & \nicefrac{1}{3} & 0 & \nicefrac{2}{3} & \nicefrac{1}{3} & \nicefrac{1}{3} & \nicefrac{1}{3} \\
\nicefrac{1}{3} & 0 & \nicefrac{2}{3} & \nicefrac{1}{3} & \nicefrac{1}{3} & \nicefrac{1}{3} & \nicefrac{1}{3} & \nicefrac{2}{3} & 0 \\
\end{array}
\right)
$$
The size of its  support is $N(A_{13}) = 63$ and the permanent $\per (A_{13}) =  \frac{26}{3} \approx  8.667$.

16. 
$$
A_{14} = \left( 
\begin{array}{ccc|ccc|ccc}
\nicefrac{1}{2} & \nicefrac{1}{4} & \nicefrac{1}{4} & \nicefrac{1}{2} & 0 & \nicefrac{1}{2} & 0 & \nicefrac{3}{4} & \nicefrac{1}{4} \\
\nicefrac{1}{4} & \nicefrac{1}{2} & \nicefrac{1}{4} & \nicefrac{1}{2} & \nicefrac{1}{2} & 0 & \nicefrac{1}{4} & 0 & \nicefrac{3}{4} \\
\nicefrac{1}{4} & \nicefrac{1}{4} & \nicefrac{1}{2} & 0 & \nicefrac{1}{2} & \nicefrac{1}{2} & \nicefrac{3}{4} & \nicefrac{1}{4} & 0 \\
\hline
\nicefrac{1}{2} & 0 & \nicefrac{1}{2} & \nicefrac{1}{4} & \nicefrac{3}{4} & 0 & \nicefrac{1}{4} &  \nicefrac{1}{4} & \nicefrac{1}{2} \\
0 & \nicefrac{1}{2} & \nicefrac{1}{2} & \nicefrac{1}{4} & \nicefrac{1}{4} & \nicefrac{1}{2} & \nicefrac{3}{4} & \nicefrac{1}{4} & 0 \\
\nicefrac{1}{2} & \nicefrac{1}{2} & 0 & \nicefrac{1}{2} & 0 & \nicefrac{1}{2} & 0 & \nicefrac{1}{2} & \nicefrac{1}{2} \\
\hline
0 & \nicefrac{3}{4} & \nicefrac{1}{4} & \nicefrac{1}{4} & \nicefrac{1}{4} & \nicefrac{1}{2} & \nicefrac{3}{4} & 0 & \nicefrac{1}{4} \\
\nicefrac{3}{4} & 0 & \nicefrac{1}{4}  & \nicefrac{1}{4} & \nicefrac{1}{4} & \nicefrac{1}{2} & 0 & \nicefrac{3}{4} & \nicefrac{1}{4} \\
\nicefrac{1}{4} & \nicefrac{1}{4} & \nicefrac{1}{2} & \nicefrac{1}{2} & \nicefrac{1}{2} & 0 & \nicefrac{1}{4} & \nicefrac{1}{4} & \nicefrac{1}{2} \\
\end{array}
\right)
$$
The size of its  support is $N(A_{14}) = 63$ and the permanent $\per (A_{14}) =  9 $.

17. 
$$
A_{15} = \left( 
\begin{array}{ccc|ccc|ccc}
0 & \nicefrac{4}{5} & \nicefrac{1}{5} & \nicefrac{4}{5} & 0 & \nicefrac{1}{5} & \nicefrac{1}{5} & \nicefrac{1}{5} & \nicefrac{3}{5} \\
\nicefrac{4}{5} & 0 & \nicefrac{1}{5} & 0 & \nicefrac{3}{5} & \nicefrac{2}{5} & \nicefrac{1}{5} &  \nicefrac{2}{5} & \nicefrac{2}{5} \\
\nicefrac{1}{5} & \nicefrac{1}{5} & \nicefrac{3}{5} & \nicefrac{1}{5} & \nicefrac{2}{5} & \nicefrac{2}{5} & \nicefrac{3}{5} & \nicefrac{2}{5} & 0 \\
\hline
\nicefrac{3}{5} & \nicefrac{1}{5} & \nicefrac{1}{5} & \nicefrac{1}{5} & \nicefrac{2}{5} & \nicefrac{2}{5} & \nicefrac{1}{5} &  \nicefrac{2}{5} & \nicefrac{2}{5} \\
\nicefrac{1}{5} & \nicefrac{2}{5} & \nicefrac{2}{5} & \nicefrac{2}{5} & 0 & \nicefrac{3}{5} & \nicefrac{2}{5} & \nicefrac{3}{5} & 0 \\
\nicefrac{1}{5} & \nicefrac{2}{5} & \nicefrac{2}{5} & \nicefrac{2}{5} & \nicefrac{3}{5} & 0 & \nicefrac{2}{5} & 0 & \nicefrac{3}{5} \\
\hline
\nicefrac{2}{5} & 0 & \nicefrac{3}{5} & 0 & \nicefrac{3}{5} & \nicefrac{2}{5} & \nicefrac{3}{5} & \nicefrac{2}{5} & 0 \\
0 & \nicefrac{3}{5} & \nicefrac{2}{5}  & \nicefrac{3}{5} & \nicefrac{2}{5} & 0 & \nicefrac{2}{5} & 0 & \nicefrac{3}{5} \\
\nicefrac{3}{5} & \nicefrac{2}{5} & 0 & \nicefrac{2}{5} & 0 & \nicefrac{3}{5} & 0 & \nicefrac{3}{5} & \nicefrac{2}{5} \\
\end{array}
\right)
$$
The size of its  support is $N(A_{15}) = 63$ and the permanent $\per (A_{15}) =  \frac{1074}{125} =  8.592$.

18. 
$$
A_{16} = \left( 
\begin{array}{ccc|ccc|ccc}
0 & \nicefrac{4}{5} & \nicefrac{1}{5} & \nicefrac{4}{5} & 0 & \nicefrac{1}{5} & \nicefrac{1}{5} & \nicefrac{1}{5} & \nicefrac{3}{5} \\
\nicefrac{4}{5} & 0 & \nicefrac{1}{5} & 0 & \nicefrac{3}{5} & \nicefrac{2}{5} & \nicefrac{1}{5} &  \nicefrac{2}{5} & \nicefrac{2}{5} \\
\nicefrac{1}{5} & \nicefrac{1}{5} & \nicefrac{3}{5} & \nicefrac{1}{5} & \nicefrac{2}{5} & \nicefrac{2}{5} & \nicefrac{3}{5} & \nicefrac{2}{5} & 0 \\
\hline
\nicefrac{4}{5} & \nicefrac{1}{5} & 0 & 0 & \nicefrac{2}{5} & \nicefrac{3}{5} & \nicefrac{1}{5} &  \nicefrac{2}{5} & \nicefrac{2}{5} \\
0 & \nicefrac{2}{5} & \nicefrac{3}{5} & \nicefrac{3}{5} & 0 & \nicefrac{2}{5} & \nicefrac{2}{5} & \nicefrac{3}{5} & 0 \\
\nicefrac{1}{5} & \nicefrac{2}{5} & \nicefrac{2}{5} & \nicefrac{2}{5} & \nicefrac{3}{5} & 0 & \nicefrac{2}{5} & 0 & \nicefrac{3}{5} \\
\hline
\nicefrac{1}{5} & 0 & \nicefrac{4}{5} & \nicefrac{1}{5} & \nicefrac{3}{5} & \nicefrac{1}{5} & \nicefrac{3}{5} & \nicefrac{2}{5} & 0 \\
\nicefrac{1}{5} & \nicefrac{3}{5} & \nicefrac{1}{5}  & \nicefrac{1}{5} & \nicefrac{2}{5} & \nicefrac{1}{5} & \nicefrac{2}{5} & 0 & \nicefrac{3}{5} \\
\nicefrac{3}{5} & \nicefrac{2}{5} & 0 & \nicefrac{2}{5} & 0 & \nicefrac{3}{5} & 0 & \nicefrac{3}{5} & \nicefrac{2}{5} \\
\end{array}
\right)
$$
The size of its  support is $N(A_{16}) = 63$ and the permanent $\per (A_{16}) =  \frac{1141}{125} =  9.128$.

19. 
$$
A_{17} = \left( 
\begin{array}{ccc|ccc|ccc}
0 & \nicefrac{5}{6} & \nicefrac{1}{6} & \nicefrac{5}{6} & 0 & \nicefrac{1}{6} & \nicefrac{1}{6} & \nicefrac{1}{6} & \nicefrac{2}{3} \\
\nicefrac{5}{6} & 0 & \nicefrac{1}{6} & 0 & \nicefrac{1}{2} & \nicefrac{1}{2} & \nicefrac{1}{6} &  \nicefrac{1}{2} & \nicefrac{1}{3} \\
\nicefrac{1}{6} & \nicefrac{1}{6} & \nicefrac{2}{3} & \nicefrac{1}{6} & \nicefrac{1}{2} & \nicefrac{1}{3} & \nicefrac{2}{3} & \nicefrac{1}{3} & 0 \\
\hline
\nicefrac{5}{6} & 0 & \nicefrac{1}{6} & 0 & \nicefrac{1}{2} & \nicefrac{1}{2} & \nicefrac{1}{6} &  \nicefrac{1}{2} & \nicefrac{1}{3} \\
0 & \nicefrac{1}{2} & \nicefrac{1}{2} & \nicefrac{1}{2} & 0 & \nicefrac{1}{2} & \nicefrac{1}{2} & \nicefrac{1}{2} & 0 \\
\nicefrac{1}{6} & \nicefrac{1}{2} & \nicefrac{1}{3} & \nicefrac{1}{2} & \nicefrac{1}{2} & 0 & \nicefrac{1}{3} & 0 & \nicefrac{2}{3} \\
\hline
\nicefrac{1}{6} & \nicefrac{1}{6} & \nicefrac{2}{3} & \nicefrac{1}{6} & \nicefrac{1}{2} & \nicefrac{1}{3} & \nicefrac{2}{3} & \nicefrac{1}{3} & 0 \\
\nicefrac{1}{6} & \nicefrac{1}{2} & \nicefrac{1}{3}  & \nicefrac{1}{2} & \nicefrac{1}{2} & 0 & \nicefrac{1}{3} & 0 & \nicefrac{2}{3} \\
\nicefrac{2}{3} & \nicefrac{1}{3} & 0 & \nicefrac{1}{3} & 0 & \nicefrac{2}{3} & 0 & \nicefrac{2}{3} & \nicefrac{1}{3} \\
\end{array}
\right)
$$
The size of its  support is $N(A_{17}) = 63$ and the permanent $\per (A_{17}) =  \frac{85}{9} \approx  9.444$.

20. 
$$
A_{18} = \left( 
\begin{array}{ccc|ccc|ccc}
\nicefrac{1}{4} & \nicefrac{1}{2} & \nicefrac{1}{4} & \nicefrac{1}{2} & \nicefrac{1}{4} & \nicefrac{1}{4} & \nicefrac{1}{4} & \nicefrac{1}{4} & \nicefrac{1}{2} \\
\nicefrac{1}{2} & \nicefrac{1}{4} & \nicefrac{1}{4} & \nicefrac{1}{4} & 0 & \nicefrac{3}{4} & \nicefrac{1}{4} & \nicefrac{3}{4} & 0 \\
\nicefrac{1}{4} & \nicefrac{1}{4} & \nicefrac{1}{2} & \nicefrac{1}{4} & \nicefrac{3}{4} & 0 & \nicefrac{1}{2} & 0 & \nicefrac{1}{2} \\
\hline
\nicefrac{1}{4} & \nicefrac{1}{2} & \nicefrac{1}{4} & 0 & \nicefrac{1}{4} & \nicefrac{3}{4} & \nicefrac{3}{4} &  \nicefrac{1}{4} & 0 \\
\nicefrac{1}{2} & 0 & \nicefrac{1}{2} & \nicefrac{1}{4} & \nicefrac{3}{4} & 0 & \nicefrac{1}{4} & \nicefrac{1}{4} & \nicefrac{1}{2} \\
\nicefrac{1}{4} & \nicefrac{1}{2} & \nicefrac{1}{4} & \nicefrac{3}{4} & 0 & \nicefrac{1}{4} & 0 & \nicefrac{1}{2} & \nicefrac{1}{2} \\
\hline
\nicefrac{1}{2} & 0 & \nicefrac{1}{2} & \nicefrac{1}{2} & \nicefrac{1}{2} & 0 & 0 & \nicefrac{1}{2} & \nicefrac{1}{2} \\
0 & \nicefrac{3}{4} & \nicefrac{1}{4}  & \nicefrac{1}{2} & \nicefrac{1}{4} & \nicefrac{1}{4} & \nicefrac{1}{2} & 0 & \nicefrac{1}{2} \\
\nicefrac{1}{2} & \nicefrac{1}{4} & \nicefrac{1}{4} & 0 & \nicefrac{1}{4} & \nicefrac{3}{4} & \nicefrac{1}{2} & \nicefrac{1}{2} & 0 \\
\end{array}
\right)
$$
The size of its  support is $N(A_{18}) = 64$ and the permanent $\per (A_{18}) =  \frac{145}{16} = 9.0625$.

21. 
$$
A_{19} = \left( 
\begin{array}{ccc|ccc|ccc}
\nicefrac{1}{5} & \nicefrac{2}{5} & \nicefrac{2}{5} & \nicefrac{3}{5} & \nicefrac{1}{5} & \nicefrac{1}{5} & \nicefrac{1}{5} & \nicefrac{2}{5} & \nicefrac{2}{5} \\
\nicefrac{2}{5} & \nicefrac{1}{5} & \nicefrac{2}{5} & \nicefrac{1}{5} & \nicefrac{4}{5} & 0 & \nicefrac{2}{5} &  0 & \nicefrac{3}{5} \\
\nicefrac{2}{5} & \nicefrac{2}{5} & \nicefrac{1}{5} & \nicefrac{1}{5} & 0 & \nicefrac{4}{5} & \nicefrac{2}{5} & \nicefrac{3}{5} & 0 \\
\hline
\nicefrac{2}{5} & \nicefrac{3}{5} & 0 & 0 & \nicefrac{2}{5} & \nicefrac{3}{5} & \nicefrac{3}{5} &  0 & \nicefrac{2}{5} \\
0 & \nicefrac{2}{5} & \nicefrac{3}{5} & \nicefrac{4}{5} & 0 & \nicefrac{1}{5} & \nicefrac{1}{5} & \nicefrac{3}{5} & \nicefrac{1}{5} \\
\nicefrac{3}{5} & 0 & \nicefrac{2}{5} & \nicefrac{1}{5} & \nicefrac{3}{5} & \nicefrac{1}{5} & \nicefrac{1}{5} & \nicefrac{2}{5} & \nicefrac{2}{5} \\
\hline
\nicefrac{2}{5} & 0 & \nicefrac{3}{5} & \nicefrac{2}{5} & \nicefrac{2}{5} & \nicefrac{1}{5} & \nicefrac{1}{5} & \nicefrac{2}{5} & \nicefrac{1}{5} \\
\nicefrac{3}{5} & \nicefrac{2}{5} & 0  & 0 & \nicefrac{1}{5} & \nicefrac{4}{5} & \nicefrac{2}{5} & \nicefrac{2}{5} & \nicefrac{1}{5} \\
0 & \nicefrac{3}{5} & \nicefrac{2}{5} & \nicefrac{3}{5} & \nicefrac{2}{5} & 0 & \nicefrac{2}{5} & 0 & \nicefrac{3}{5} \\
\end{array}
\right)
$$
The size of its  support is $N(A_{19}) = 65$ and the permanent $\per (A_{19}) =  \frac{223}{25} =  8.92$.

\begin{bibdiv}
    \begin{biblist}[\normalsize]
    \bibselect{biblio}
    \end{biblist}
    \end{bibdiv}

\end{document}